\let\mathcal\mathscr
\numberwithin{equation}{section}
\renewcommand{\d}{\mathrm{d}}
\renewcommand{\phi}{\varphi}
\renewcommand{\rho}{\varrho}
\newcommand{\0}{\mathbf{0}}
\newcommand{\PP}{\mathbb{P}}
\renewcommand{\AA}{\mathbb{A}}
\newcommand{\FF}{\mathbb{F}}
\newcommand{\ZZ}{\mathbb{Z}}
\newcommand{\NN}{\mathbb{N}}
\newcommand{\QQ}{\mathbb{Q}}
\newcommand{\RR}{\mathbb{R}}
\newcommand{\CC}{\mathbb{C}}
\newcommand{\TT}{\mathbb{T}}
\newcommand{\KK}{\mathbb{K}}
\newcommand{\cO}{\mathcal{O}}
\DeclareMathOperator{\tr}{Tr}
\renewcommand{\leq}{\leqslant}
\renewcommand{\geq}{\geqslant}
\renewcommand{\bar}{\overline}
\newcommand{\x}{\mathbf{x}}
\newcommand{\y}{\mathbf{y}}
\newcommand{\z}{\mathbf{z}}
\renewcommand{\k}{\mathbf{k}}
\newcommand{\s}{\mathbf{s}}
\newcommand{\ee}{\mathbf{e}}
\newcommand{\bal}{\boldsymbol{\alpha}}
\def \Mor {\operatorname{Mor}}
\renewcommand{\t}{\mathbf{t}}
\renewcommand{\hat}{\widehat}
\newcommand{\g}{\mathbf{g}}
\newtheorem{theorem}{Theorem}[section]
\newtheorem{lemma}[theorem]{Lemma}
\newtheorem{proposition}[theorem]{Proposition}
\newtheorem{corollary}[theorem]{Corollary}
\theoremstyle{definition}
\newtheorem{hyp}[theorem]{Hypothesis}
\newtheorem{remark}[theorem]{Remark}
\newtheorem*{ack}{Acknowledgements}
\newtheorem*{summary}{Summary of the contents}
\DeclareMathOperator{\ord}{ord}
\title[Rational surfaces]{Rational surfaces on low degree hypersurfaces}
\author{Tim Browning}
\address{IST Austria\\
Am Campus 1\\
3400 Klosterneuburg\\
Austria}
\email{tdb@ist.ac.at}
\author{Shuntaro Yamagishi}
\address{IST Austria\\
Am Campus 1\\
3400 Klosterneuburg\\
Austria}
\email{shuntaro.yamagishi@ist.ac.at}
\subjclass[2020]{14H10 (11P55, 14G05, 14J26, 14J70)}
\date{\today}
\begin{document}

\begin{abstract}
We use function field analytic number theory to establish the irreducibility and dimension of the
moduli space that parameterises morphisms $\PP^2\to X$ of fixed degree, for an arbitrary smooth  hypersurface
$X$ of sufficiently small degree.
\end{abstract}

\maketitle
\thispagestyle{empty}

\setcounter{tocdepth}{1}
\tableofcontents

\section{Introduction}\label{s:intro}

The study of rational curves on hypersurfaces has led to major advances in algebraic geometry, with deep connections to arithmetic geometry, enumerative geometry and moduli theory. In contrast, comparatively little is known about  {\em rational surfaces} contained in hypersurfaces. In this paper we investigate the moduli space of degree $e$ morphisms $\PP^2 \to X$, where $X \subset \PP^{n-1}$ is a smooth hypersurface of degree $d \geq 2$ defined over an algebraically closed field $K$ whose characteristic exceeds $d$ if it is positive. Our main result establishes that this moduli space is irreducible and has the expected dimension  when $n$ is sufficiently large in terms of $d$ and $e$, extending techniques originally developed for the study of rational curves.

The geometry of the moduli space $\mathcal M_{0,0}(X,e)$ of degree $e$ rational curves on $X$
has been  extensively studied. 
The irreducibility and dimension of $\mathcal{M}_{0,0}(X,e)$ have been established by Kim and Pandharipande  \cite[Cor.~1]{kim} when $d=2$, and by  Coskun and Starr \cite{CS} when $d = 3$. For general hypersurfaces $X$ of degree $d$, the strongest result is due to Riedl and Yang \cite{RY}, who use a version of  Bend-and-Break to prove that
$\mathcal M_{0,0}(X,e)$ is irreducible and has the  expected dimension $\mu_1(e)=e(n-d)+n-5$,  provided that $n\geq d+3$.
Recent work of Bilu  and Browning \cite[Cor.~1.4]{bilu}
achieves the same conclusion for any  smooth hypersurface $X\subset \PP^{n-1}$ of degree $d$, provided that
$n> (d-1)2^{d}$, improving on work of  Browning and Sawin \cite[Thm.~1.1]{BS2} who 
required $n> (2d - 1)2^{d-1}$. The latter result is proved using analytic number theory over function fields and, in turn,  builds on an approach
employed by Browning and Vishe \cite{BV'}. The idea is to study
the moduli space of morphisms
$\Mor_e(\PP^1,X)$, whose expected dimension is $\mu_1(e)+3$, since
$\mathcal{M}_{0,0}(X,e)$ is obtained from $\Mor_e(\PP^1,X)$ 
on taking the quotient by $\mathrm{PGL}_2$. 
To compute the dimension of $\Mor_e(\PP^1,X)$, it suffices to work over a finite field $\FF_q$ of  characteristic $>d$, estimate the cardinality
$\#\Mor_e(\PP^1,X)(\FF_q)$ using analytic number theory, and then compare it with the statement of the Lang--Weil estimate. The  application of function field analytic number theory to the geometry 
of the moduli spaces of degree $e$ morphisms to $X$
has  been 
implemented in a range of contexts, provided that $n$ is sufficiently large in terms of $d$ and $e$. These applications include:
 \begin{itemize}
 \item
The dimension of the  singular locus of
 $\mathcal M_{0,0}(X,e)$ (Browning--Sawin \cite{BS2});
 \item
 The geometry of  $\mathcal M_{0,m}(X,e)$ when $X$ is a complete intersection cut out by more than one equation  (Browning--Vishe--Yamagishi \cite{BVY});
\item
The geometry of the moduli space
 $\mathcal M_{g,0}(X,e)$
of degree $e$ morphisms from a smooth
projective curve of genus $g$ to $X$ (Hase-Liu \cite{hase1});
\item
The singularities of $\mathcal M_{g,0}(X,e)$ (Glas--Hase-Liu \cite{hase2}).
 \end{itemize}

While the geometry of rational curves on hypersurfaces is now well understood in many regimes, the analogous study of rational surfaces is  largely undeveloped.
As described by Lang \cite{lang},
a result of Tsen shows that $X$ always contains a rational surface
if $n> d^2$.
By contrast,
when $X$ is very general and $n> \max\{20,d\}$, for a suitable range of
 $e$ compared to  $d$ and $n$, it follows from work of
Beheshti  and Riedl \cite{roya'} that there are no
rational surfaces in $X$ that are ruled by low-degree rational curves.

Let $\Mor_e(\PP^2,X)$
denote the  moduli space
 of degree $e$ morphisms $g:\PP^2\to X$.
Such a morphism is given by
$
g=(g_1,\dots,g_n),
$
where $g_1,\dots,g_n\in K[u,v,w]$ are  ternary forms of degree $e$,
with no common zero in $\PP^2$, such that $f(g_1,\dots,g_n)$ vanishes  identically.
Since the number of monomials of degree $D$ in three variables is $\binom{D + 2}{2}$,
we may regard $g$ as a point in $\PP^{n \binom{e + 2}{2}-1}$ and
the morphisms of degree $e$ on $X$ are parameterised by
$\Mor_e(\PP^2,X)$, which is an open subvariety of
$\PP^{n \binom{e + 2}{2}-1}$ cut out by a system of $\binom{d e + 2}{2}$ equations of degree $d$.
This shows that 
either  $\Mor_e(\PP^2,X)$ is empty or else it has dimension at least $\mu(e)$,
where
\begin{equation}\label{eq:mu}
\mu(e) = n \binom{e + 2}{2}  -  \binom{d e + 2}{2} -1
\end{equation}
is the {\em expected dimension} of $\Mor_e(\PP^2,X)$. 

Although it does not address
the geometry of $\Mor_e(\PP^2,X)$, the most relevant work in this direction is due to Starr \cite{starr},
who focuses on the moduli space
$F_2(e,X)$ of degree $e$ {\em Veronese surfaces} contained in $X$. These are rational surfaces
embedded in $X$ via morphisms induced by complete linear systems of degree $e$ homogeneous forms.
(In fact, Starr's work treats the more general case of  degree $e$ Veronese $r$-folds, for any $r\in \NN$, but in this discussion we restrict our attention to the case $r=2$.)
Note that when $e = 1$, we recover the classical Fano variety of planes $F_2(X) = F_2(1, X)$  in $X$, which  is obtained from  $\Mor_1(\PP^2,X)$ on taking the quotient by $\mathrm{PGL}_{3}$.
It follows from \cite[Thm.~1.3]{starr} that $F_2(e,X)$ has the expected dimension for a sufficiently general hypersurface
$X\subset \PP^{n-1}$ of degree $d$, provided that
\begin{equation}
\label{starr1}
n\geq \frac{(de+2)(de+1)}{(e+2)(e+1)}+(e+2)(e+1).
\end{equation}
Similarly, the irreducibility of $F_2(e,X)$ can be deduced from \cite[Cor.~1.5]{starr} and \cite[Thm.~1.6(i)]{starr}
under the more stringent condition that
\begin{equation}\label{starr2}
n\geq n_e+\frac{1}{n_e+1}\binom{n_e+d}{n_e},
\end{equation}
where $n_e=\frac{1}{2}e(e+3)$.
For fixed $d$,  Stirling's formula shows that the right hand side is asymptotic to $e^{2(d-1)}/(\exp(d)2^{d-1}d!)+\mathbf{1}_{d=2}e^2/2$, as $e\to \infty$.

More is known about the  Fano variety of planes $F_2(X)$. According to work of
Hochster and Laksov \cite{hoch},
for a general hypersurface
$X$, the variety $F_2(X)$ is irreducible and has the expected dimension $\mu(1)-8=3n-\binom{d+2}{2}-9$, whenever this quantity is positive. In 
 fact, it follows from  \cite[Thm.~6.28]{3264} that $F_2(X)$ is non-empty if $d \geq 3$ and $3(n-3) - \binom{d + 2}{2} \geq 0$.

\begin{remark}
We can give simple conditions under which 
$\Mor_e(\PP^2,X)$ is non-empty, for any hypersurface $X\subset \PP^{n-1}$ of degree $d$.
Suppose first that 
$d \geq 3$ and 
$n\geq \frac{1}{3}\binom{d+2}{2} + 3$.
Then $F_2(X)\neq \emptyset$ by \cite[Thm.~6.28]{3264}, whence  $\Mor_1(\PP^2,X)\neq \emptyset$. But then, on composing any morphism in $\Mor_1(\PP^2,X)$ with one from $\Mor_e(\PP^2, \PP^2)$, we obtain a morphism in $\Mor_e(\PP^2,X)$. 
If  $d=2$ and $n\geq 6$
then 
$F_2(X)\neq \emptyset$ by \cite[Thm.~22.13]{harris}, whence $\Mor_e(\PP^2, X)$ is non-empty if $n\geq 6$.
\end{remark}

We are now ready to  state our main result, which  for the first time 
establishes the irreducibility and dimension of $\Mor_e(\PP^2,X)$ in suitable regimes,
for arbitrary smooth hypersurfaces $X\subset \PP^{n-1}$.
  In the case $e=1$ we also record an 
easy  consequence for the Fano variety of planes.

\begin{theorem}\label{t:main}
Let $d \geq 2$ and $e \geq 1$ be integers and let  $K$ be an algebraically closed field whose characteristic is $0$ or exceeds $d$.
Let
$X\subset \PP^{n-1}$ be a smooth hypersurface of degree $d$ defined over  $K$ such that
$n > 2^{d}(d-1)(de+1).$
Then $\Mor_e(\PP^2,X)$
is  irreducible and has dimension $\mu(e)$.
\end{theorem}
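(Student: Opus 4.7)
\medskip\noindent
\textbf{Proof plan.} I would adapt the function field analytic number theory strategy of Browning--Vishe \cite{BV'} and Browning--Sawin \cite{BS2} from the source $\PP^1$ to the source $\PP^2$. By standard spreading-out, it is enough to work over a finite field $\FF_q$ of characteristic exceeding $d$ and to prove an asymptotic
\[
\#\Mor_e(\PP^2, X)(\FF_q) = q^{\mu(e)} + O\bigl(q^{\mu(e) - 1/2}\bigr)
\]
for large $q$, whereupon irreducibility and dimension over $\bar{\FF_q}$ (hence over $K$) follow from Lang--Weil together with the fact that $\Mor_e(\PP^2,X)$ spreads out to a scheme of finite type whose generic fibre is the given variety.

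To set up the count, I would parametrise affine representatives of points of $\Mor_e(\PP^2, X)(\FF_q)$ by the coefficients $c_{i,\alpha}$ of the ternary forms $g_i(u,v,w) = \sum_{|\alpha|=e} c_{i,\alpha} u^{\alpha_1}v^{\alpha_2}w^{\alpha_3}$. The composition $f(g_1, \ldots, g_n)$ is a ternary form of degree $de$ whose coefficients $F_\beta(\c)$, as $\beta$ runs over exponents with $|\beta|=de$, constitute a system of $R = \binom{de+2}{2}$ forms of degree $d$ in the $N = n\binom{e+2}{2}$ variables $\c$. The identity $f\circ g \equiv 0$ is equivalent to the simultaneous vanishing $F_\beta(\c)=0$ for all $\beta$. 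After dividing by the $(q-1)$-fold scaling action and discarding the lower-dimensional locus of tuples with a common projective zero (a routine verification in the stated range of $n$), the problem reduces to estimating the number of $\c\in \FF_q^N$ annihilated by $(F_\beta)_\beta$ via the discrete Fourier-analytic circle method of \cite{BV',BS2}. The major arcs should then contribute the expected main term of size $q^{N-R}=q^{\mu(e)+1}$, multiplied by a positive singular series, the positivity of which can be extracted from Hensel-type lifting together with smoothness of $X$.

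The crux, and main obstacle, is the minor arcs, where one must establish a Weyl-type bound for
\[
\sum_{\c\in \FF_q^N} \psi\!\left(\sum_{|\beta|=de} \lambda_\beta F_\beta(\c)\right)
\]
strong enough to match the precise threshold $n > 2^d(d-1)(de+1)$. Following Birch's scheme in its function-field incarnation, such a bound reduces to a codimension estimate on the singular locus $V^*\subset \AA^N$ where the Jacobian $(\partial F_\beta/\partial c_{i,\alpha})$ fails to have rank $R$. By the chain rule this Jacobian factors through $\nabla f$ evaluated at $g$ together with the differential of $\c\mapsto g$, so smoothness of $X$ (which makes $\nabla f$ vanish only at the origin) should localise the analysis onto the degeneracy loci where $g$ fails to be sufficiently generic as a linear system of ternary forms on $\PP^2$. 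Bounding the dimension of these loci uniformly in the relevant parameters is the technical heart of the argument; the increase to $R=\binom{de+2}{2}$ equations and the two-dimensional nature of the source together account for the extra factor $(de+1)$ relative to the $(2d-1)2^{d-1}$ threshold of \cite{BS2} for $\PP^1$. Once this Birch-type rank bound is secured, assembling the major and minor arc contributions yields the asymptotic above, and Lang--Weil delivers the theorem.
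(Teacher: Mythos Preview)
Your spreading-out and Lang--Weil reduction is correct and is exactly how the paper begins. The gap is in the minor-arc step: a direct Birch-style attack on the system of $R=\binom{de+2}{2}$ forms in $N=n\binom{e+2}{2}$ scalar variables over $\FF_q$ cannot reach the stated threshold. Concretely, take $\lambda$ supported on the single index $\beta=(de,0,0)$. Then
\[
F_\lambda(\c)=f\bigl(c_{1,(e,0,0)},\ldots,c_{n,(e,0,0)}\bigr)
\]
depends on only $n$ of the $N$ variables, so its singular locus has codimension exactly $n$ and any Weyl-type bound gives at best $|S_\lambda|\ll q^{N-n/2^{d-1}}$; absorbing the $q^R$ characters then forces $n>2^{d-1}R$, which is \emph{quadratic} in $e$. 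Your chain-rule heuristic for the full Birch singular locus $V^*$ does not rescue this either: the locus where every $g_i$ is divisible by $w$ already lies in $V^*$ (every Jacobian row with $\beta_3\le d-2$ vanishes there), so $N-\dim V^*\le n(e+1)$, and you give no argument that $\dim V^*$ is not larger still. The paper explicitly flags this obstruction in its introduction (in the $\FF_q(u)$ model one has $B\ge en$, so the codimension is at most $n$), and Remark~\ref{sec:finalrem} explains why no choice of Weyl differencing can push the relevant codimension beyond $n$.

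What the paper actually does is structurally different from your plan. It dehomogenises in one projective coordinate to pass to the function field $\FF_q(u)$, so that $f(g)=0$ becomes only $de+1$ \emph{polynomial} equations $F_j(\t_0,\ldots,\t_e)=0$ over $\FF_q[u]$, with unknowns $\t_s\in\FF_q[u]^n$ lying in a lopsided box $|\t_s|<q^{s+1}$. The key new idea is that for each $j=(\ell-1)d+r$ the form $F_j$ carries a distinguished bihomogeneous top piece
\[
G_j(\t_{\ell-1};\t_\ell)=C_0\,\Gamma_f(\underbrace{\t_{\ell-1},\ldots,\t_{\ell-1}}_{d-r},\underbrace{\t_\ell,\ldots,\t_\ell}_{r})
\]
in only the $2n$ variables $(\t_{\ell-1},\t_\ell)$. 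A tailored bihomogeneous Weyl differencing (extending Schindler's method, with an extra differencing step to kill the non-bihomogeneous lower-order terms) shows that the relevant auxiliary variety for $G_j$ has codimension $\sigma_{G_j}\ge n$. This is combined with a product decoupling
\[
|S(\bal)|^{2^{d-1}}=\prod_{j=0}^{de}|S(\bal)|^{2^{d-1}/(de+1)}
\le C\prod_{j=0}^{de}|E_j(\alpha_j)|^{1/(de+1)},
\]
which factors the $(de+1)$-fold integral over $\TT^{de+1}$ into $de+1$ \emph{independent one-variable} mean values, each treatable as a single hypersurface. It is precisely this decoupling, together with $\sigma_{G_j}\ge n$, that turns the requirement into $\frac{n}{(de+1)2^{d-1}}>2(d-1)$, i.e.\ the linear threshold $n>2^d(d-1)(de+1)$. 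Neither the passage to $\FF_q(u)$ with its lopsided box, nor the bihomogeneous differencing, nor the product decoupling appears in your plan, and without them the claimed constant is out of reach.
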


\begin{corollary}
Let $d \geq 2$ and let  $K$ be an algebraically closed field whose characteristic is $0$ or exceeds $d$.
Let $X\subset \PP^{n-1}$ be a smooth hypersurface of degree $d$ defined over  $K$ such that $n > 2^{d}(d^2-1).$
Then $F_2(X)$ is  irreducible and has dimension $3n-\binom{d+2}{2}-9$.
\end{corollary}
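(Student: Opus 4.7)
The plan is to deduce the corollary directly from Theorem~\ref{t:main} applied with $e=1$. Observe that for $e=1$ the hypothesis $n > 2^{d}(d-1)(de+1)$ becomes $n > 2^{d}(d-1)(d+1) = 2^{d}(d^{2}-1)$, which is exactly the assumption of the corollary. Theorem~\ref{t:main} then asserts that $\Mor_1(\PP^2,X)$ is irreducible of dimension
$$
\mu(1) = n\binom{3}{2} - \binom{d+2}{2} - 1 = 3n - \binom{d+2}{2} - 1,
$$
using \eqref{eq:mu}.

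To pass from $\Mor_1(\PP^2,X)$ to $F_2(X)$, I would use that a degree-$1$ morphism $g:\PP^2\to X\subset \PP^{n-1}$ is given by $n$ linear forms $L_1,\dots,L_n$ in $(u,v,w)$ with no common zero in $\PP^2$. The absence of common zeros forces the $L_i$ to span the whole $3$-dimensional space of linear forms on $\PP^2$, so $g$ is a closed immersion and $g(\PP^2)$ is a plane contained in $X$. The assignment $g\mapsto g(\PP^2)$ therefore defines a surjective morphism
$$
\pi:\Mor_1(\PP^2,X)\to F_2(X),
$$
whose fibres are the orbits of the natural $\mathrm{PGL}_3$-action on $\Mor_1(\PP^2,X)$ by precomposition. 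This action is free: if $\sigma\in\mathrm{PGL}_3$ satisfies $g\circ\sigma = g$, then $\sigma=\id$, since $g$ is injective.

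It follows that $F_2(X)$ is the geometric quotient $\Mor_1(\PP^2,X)/\mathrm{PGL}_3$, hence
$$
\dim F_2(X) = \dim \Mor_1(\PP^2,X) - \dim \mathrm{PGL}_3 = \mu(1) - 8 = 3n - \binom{d+2}{2} - 9,
$$
and its irreducibility is automatic, since $F_2(X)=\pi(\Mor_1(\PP^2,X))$ is the image of an irreducible variety under a morphism. There is really no obstacle here: the substantive input is Theorem~\ref{t:main}, and the reduction from $\Mor_1(\PP^2,X)$ to $F_2(X)$ is a standard quotient/dimension computation for a free action of $\mathrm{PGL}_3=\mathrm{Aut}(\PP^2)$.
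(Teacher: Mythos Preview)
Your argument is correct and matches the paper's approach: the corollary is stated as an ``easy consequence'' of Theorem~\ref{t:main} with $e=1$, using the fact (noted in the introduction) that $F_2(X)$ is obtained from $\Mor_1(\PP^2,X)$ on taking the quotient by $\mathrm{PGL}_3$. The paper does not spell out the quotient/dimension computation, so your proposal in fact supplies more detail than the paper itself.
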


It is worth emphasising that the dependence of $n$ on $e$ is linear 
in Theorem \ref{t:main}, compared to the polynomial dependence on $e$ that occurs in 
Starr's result for the moduli space of  degree $e$ Veronese surfaces in a sufficiently  general hypersurface $X$,
with the constraints in \eqref{starr1} and \eqref{starr2}.
In Proposition~\ref{p:small} we shall prove
that
$\dim \Mor_e(\PP^2,X)>\mu(e)$ when $K=\bar\FF_p$ and $p$ is sufficiently small compared to $d$ and $e$, so that some lower bound on the characteristic is necessary when it is positive.

\medskip

Let us now describe the main ideas of the proof. By spreading out,
we may reduce the problem to estimating the quantity
\begin{equation}
\label{big*}
N(e) = \#\left\{
 \underline{ \mathbf{t} } = (\t_0, \dots, \t_e) \in \FF_q[u]^{n (e + 1) } :
\begin{array}{l}
|\t_s|  < q^{s + 1} \text{ for all $0 \leq s \leq e$}  \\
F_j(  \underline{ \t }  ) = 0 \text{ for all $0 \leq j \leq d e$}
\end{array}
\right\},
\end{equation}
where $F_j(  \underline{ \t }  )$ are the degree $d$ forms with coefficients in $\FF_q$ defined in \eqref{eq:ash}. 
For fixed integers $N,R\geq 1$, 
consider the general problem of 
estimating the quantity
$$
\#\left\{
 \x \in \FF_q[u]^{ N } :
\begin{array}{l}
|x_i|  < q^{e + \delta_i} \text{ for all $1 \leq i \leq N$}  \\
G_j(  \x  ) = 0 \text{ for all $1 \leq j \leq R$}
\end{array}
\right\},
$$
where $G_j(\x)$ are degree $d$ forms  with coefficients in $\FF_q$,  and  $\delta_1,\dots,\delta_N$
are real numbers.
This is exactly the setting of the function field analogue \cite{lee} 
of a classical result by  Birch \cite{B}. The latter is capable of providing an asymptotic formula for this counting function (as $e\to \infty$) when 
$|\delta_1|,\dots,|\delta_N|$ are sufficiently small with respect to $e$,
provided that $N-B\geq 2^{d-1}(d-1)R(R+1)$, where $B$ is the dimension of the {\em Birch singular locus}
\begin{equation}
\label{sing}
\left\{ \x \in \AA^N: \textnormal{rank} (  \textnormal{Jac}_\mathbf{G} (\x)  ) < R  \right\}.
\end{equation}
In fact, it follows from  \cite{BVY} that an asymptotic formula is also possible when $N-B$ grows only linearly in $R$.
In our setting, we have $N=n(e+1)$ and $R=de+1$, both of which depend on $e$. 
Two major obstacles  obstruct a direct  application of \cite{lee} or \cite{BVY}. First,  the side lengths of the boxes vary wildly in our setting, ranging from $q$ to $q^{e+1}$. Second, we have $B\geq en$
in our setting, 
since it turns out that
the variety  (\ref{sing}) contains all vectors
$\underline{ \mathbf{t} } \in \AA^{n (e + 1) }$ with  $\t_0 = \0$.

Let $\TT$ be the function field analogue of the unit interval. Then, 
as described in Section~\ref{sec:proof}, 
the starting point of the circle method 
is the identity
$$
N(e)=\int_{\TT^{de+1}} S(\bal) \d \bal,
$$
where $\bal=(\alpha_0,\dots,\alpha_{de})$ and 
$
S(\bal)$ is the exponential sum defined in  (\ref{SSS}).
For any choice of   $j\in \{0,\dots,de\}$ we can isolate   the portion of the phase function involving $\alpha_j$ via an expression of the form
$$
|S(\bal)|^{2^{d-1}} = \prod_{j = 0 }^{de} |S(\bal)|^{ \frac{2^{d-1}}{de + 1} }  \leq \prod_{j = 0 }^{de} |T_j(\alpha_j)|^{ \frac{1}{de + 1} },
$$
where  $T_j(\alpha_j)$ is an exponential sum whose phase function depends only on $\alpha_j$, and is   independent of $\alpha_0, \dots, \alpha_{j-1},$ $\alpha_{j+1}, \dots, \alpha_{de}$.
This part of the argument exploits the specific bihomogeneous structure of the forms $F_j$  and in the case $e=1$ is reminiscent of arguments used by Brandes \cite{JuBr} to study  linear spaces on hypersurfaces. 
This decoupling allows us to factor the integral as
$$
\int_{\TT^{de+1}}|S(\bal)| \d \bal \leq
\prod_{j = 0 }^{de}  \int_{\TT} |T_j(\alpha_j)|^{ \frac{1}{ (de + 1) 2^{d-1}} } \d \alpha_j,
$$
which  reduces the problem to the hypersurface case.

One advantage of this approach is that 
the dimension of the Birch singular locus (\ref{sing}) of the resulting system is essentially optimal, as we shall discuss further in Remark \ref{sec:finalrem}. 
A second advantage is that it  eliminates the issue of the {\em lopsidedness} of the box.
The latter causes serious technical issues, and  even a difference of $q$ in side lengths
introduces inefficiencies into the argument that result in excess factors of $q$.  
In our approach,  Weyl differencing is applied in such a way that the problem is essentially reduced to
studying an exponential sum with the phase function given by  a bihomogeneous form $G(\x; \y)$, where the summation is over $|\x| < q^{P_1}$ and $|\y| < q^{P_2}$.
This topic has been addressed by Schindler  \cite{DS} over number fields, and we  present 
a function field version that significantly extends her ideas in Section \ref{sec:BHMG}.

We believe that the main ideas of this paper, or a suitable generalisation, have the potential to be useful in studying 
$\Mor_e(\PP^r, X)$ for $r > 2$, as well as in obtaining an upper bound of the correct order of magnitude for the number of integral points on suitable complete intersections where the circle method is not directly  applicable.

\begin{summary}
In Section \ref{s:prem} we shall collect together preliminary facts about function fields, together 
with a uniform estimate for point-counting over function fields in Lemma \ref{dimbdd} and 
a function field version of a basic Weyl differencing argument due to Schmidt in Lemma~\ref{Diff}. 
 Section \ref{s:key} is concerned with the reduction of Theorem \ref{t:main} to the problem of 
 estimating $N(e)$ in \eqref{big*}
and showing that the statement of  Theorem \ref{t:main} is false in very small positive characteristic.
   Section~\ref{sec:BHMG} gives a very general treatment of Weyl differencing for  
polynomials that satisfy a certain symmetry recorded in Hypothesis~\ref{HYP}. Finally, in Section \ref{sec:proof} we complete the proof of Theorem \ref{t:main} by establishing the asymptotic formula for $N(e)$ as $q\to \infty$.
\end{summary}

\begin{ack}
The authors are very grateful to Jakob Glas, Matthew Hase-Liu,  Eric Riedl and Will Sawin for useful comments.
While working on this paper the  first author was supported  by
a FWF grant (DOI 10.55776/P36278).
\end{ack}

\section{Preliminary facts about function fields}
\label{s:prem}
In this section,  we collect together some basic notation and facts concerning the function field $\KK = \FF_q(u)$, where $\FF_q$ is a finite field of characteristic $p$.

\subsection*{Notation}
We shall need  absolute value
$
|a/b|=
q^{\deg a-\deg b},
$
for any $a/b\in \KK^*$. We extend these definitions to  $\KK$ by taking $|0|=0.$
Let  $\KK_\infty$ be  the completion of $\KK$  with respect to $|\cdot|$.
We can extend the absolute value to get  a non-archimedean
absolute value
$|\cdot|: \KK_\infty\rightarrow \RR_{\geq 0}$
given by $|\alpha|=q^{\ord \alpha}$, where $\ord \alpha$ is the largest $i\in
\ZZ$ such that $a_i\neq 0$ in the
representation $\alpha=\sum_{i\leq M}a_i u^i$.
In this context we adopt the convention $\ord 0=-\infty$ and $|0|=0$.
We extend this to vectors by setting
$
|\x|=\max_{1\leq i\leq n}|x_i|,
$
for any $\x\in \KK_\infty^n$, noting that it satisfies  the ultrametric inequality
$
|\x + \y| \leq \max \{ |\x|, |\y| \},
$
for any $\x, \y \in \KK_\infty^n$.

We may identify $\KK_\infty$ with the set
$$
\FF_q((u^{-1}))=\left\{\sum_{i\leq M}a_i u^i: \text{for $a_i\in \FF_q$ and some $M \in\ZZ$} \right\}
$$
and put
$$
\TT=\{\alpha\in \KK_\infty: |\alpha|<1\}=\left\{\sum_{i\leq -1}a_i u^i: \text{for $a_i\in \FF_q$}
\right\}.
$$
Since $\TT$ is a locally compact
additive subgroup of $\KK_\infty$ it possesses a unique Haar measure $\d
\alpha$, which is normalised so that
$
\int_\TT \d \alpha=1.
$
We can extend $\d\alpha$ to a (unique) translation-invariant measure on $\KK_\infty$ in
such a way that
$$
\int_{\{\alpha\in \KK_\infty :
|\alpha|< q^M
\}} \d \alpha= q^M,
$$
for any $M \in \ZZ$.
These measures also extend to $\TT^n$ and $\KK_\infty^n$, for any $n\in \NN$.
Given $\alpha \in \KK_\infty$ we define $\| \alpha  \| = |\{ \alpha \}|$, where $\{\alpha\} \in \TT$ is the {\em fractional part} of $\alpha$.
Given any finite set $S$ we shall interchangeably write $\#S$ or $|S|$ to denote its cardinality.

\subsection*{Characters}
There is a non-trivial additive character $e_q:\FF_q\rightarrow \CC^*$ defined
for each $a\in \FF_q$ by taking
$e_q(a)=\exp(2\pi i \tr(a)/p)$, where $\tr: \FF_q\rightarrow \FF_p$ denotes the
trace map.
This character induces a non-trivial (unitary) additive character $\psi:
\KK_\infty\rightarrow \CC^*$ by defining $\psi(\alpha)=e_q(a_{-1})$ for any
$\alpha=\sum_{i\leq M}a_i u^i$ in $\KK_\infty$.
We have the  basic orthogonality property
\begin{equation}\label{eq:laurel}
\sum_{\substack{b\in \cO\\ |b|< q^N}}\psi(\gamma b)=\begin{cases}
q^N & \text{if $|\gamma|< q^{-N}$,}\\
0 & \text{otherwise},
\end{cases}
\end{equation}
for any $\gamma \in \TT$ and any integer $N\geq 0$, as proved in  \cite[Lemma 7]{kubota}.
We also have
\begin{equation}\label{eq:ortho}
\int_{\{\alpha\in \KK_\infty: |\alpha|<q^M\}} \psi(\alpha \gamma) \d \alpha =\begin{cases}
q^M &\text{ if $|\gamma|<q^{-M}$,}\\
0 &\text{ otherwise,}
\end{cases}
\end{equation}
for any $\gamma\in \KK_\infty$ and $M \in \ZZ$, as proved in \cite[Lemma 1(f)]{kubota}.

\subsection*{Counting in function fields}
Given a variety $X \subset \AA^n$ defined over $\overline{\FF_q(u)}$, we define
$$
\delta(X) = \deg X_1 + \cdots + \deg X_s,
$$
where $X = \bigcup_{1 \leq  i \leq s} X_i$ is the decomposition of $X$ into its geometrically irreducible components. Note that $\delta(X) = \deg X$ if $X$ is equidimensional and the B\'{e}zout inequality $\delta(X \cap Y) \leq \delta(X) \delta (Y)$ always holds. We write
$$
X(J) =  X  \cap  \{ \x \in \FF_q[u]^n: |\x|< q^J   \},
$$
for any  $J \in \mathbb{N}$.
The following result extends to $\FF_q(u)$ a familiar finite field counting result that was recorded  in \cite[Lemma~2.1]{BVY}.

\begin{lemma}
\label{dimbdd}
Let $X \subset \mathbb{A}^n$ be a variety defined over  $\overline{\FF_q(u)}$. Then
$$\# X(J)  \leq \delta(X) q^{J \dim X},
$$
for any $J \geq 1$.
\end{lemma}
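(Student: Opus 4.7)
The plan is to argue by induction on $\dim X$, mirroring the standard $\FF_q$-point counting argument in \cite[Lem.~2.1]{BVY}. First I would reduce to the case where $X$ is geometrically irreducible: writing $X = X_1 \cup \dots \cup X_s$ with each $X_i$ geometrically irreducible of dimension $d_i \leq \dim X$, we have
$$\#X(J) \leq \sum_{i=1}^s \#X_i(J),$$
so it suffices to prove the sharper bound $\#X_i(J) \leq \deg(X_i)\, q^{J d_i}$ for each $i$; summing recovers $\delta(X)\, q^{J\dim X}$ after using $q^{J d_i} \leq q^{J\dim X}$.

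The base case $d := \dim X = 0$ is trivial: an irreducible $0$-dimensional variety over $\overline{K}$ is a single geometric point of degree $1$, and so $\#X(J) \leq 1$.

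For the inductive step, assume $X$ is geometrically irreducible of dimension $d \geq 1$ and degree $D$. Since $X$ is not a single point, at least one coordinate function $x_i$ is non-constant on $X$; equivalently, $X \not\subset H_a$ for any $a \in \overline{K}$, where $H_a = \{x_i = a\}$. For each $a \in \FF_q[u]$ with $|a| < q^J$ the intersection $X \cap H_a$ is therefore proper, so Krull's principal ideal theorem gives $\dim(X \cap H_a) = d-1$, and the B\'ezout inequality quoted in the paper yields $\delta(X \cap H_a) \leq \delta(X)\delta(H_a) = D$. The inductive hypothesis then furnishes
$$\#(X \cap H_a)(J) \leq D\, q^{J(d-1)}.$$
Partitioning the set $X(J)$ according to the value of its $i$-th coordinate, which ranges over the $q^J$ polynomials in $\FF_q[u]$ of degree $<J$, gives
$$\#X(J) \leq \sum_{\substack{a \in \FF_q[u]\\ |a|<q^J}}\#(X \cap H_a)(J) \leq q^J \cdot D\, q^{J(d-1)} = D\, q^{Jd},$$
completing the induction.

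There is no real obstacle here: both key inputs (B\'ezout for $\delta$, and the codimension drop under a hyperplane section) are purely scheme-theoretic and hold over any algebraically closed base field, so the classical $\FF_q$-argument transfers verbatim to $\overline{\FF_q(u)}$. The only step worth double-checking is the existence of a non-constant coordinate on $X$, which is immediate from $\dim X \geq 1$.
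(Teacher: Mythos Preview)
Your proof is correct and follows essentially the same approach as the paper: both argue by induction on $\dim X$, reduce to irreducible components, choose a coordinate that is non-constant on the component, slice by hyperplanes $\{x_i = a\}$ with $|a| < q^J$, and apply B\'ezout together with the inductive hypothesis. The only cosmetic slip is that $\dim(X \cap H_a) = d-1$ should read $\leq d-1$ (the intersection may be empty), but this does not affect the bound.
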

\begin{proof}
We argue by induction on $r = \dim X$. When $r=0$ then $X$ consists of at most $\deg X = \delta(X)$ points and the lemma is trivial.
Assume that $r \geq 1$ and let
$Z$ be an irreducible component of $X$, with  $\dim Z \geq 1$.
We shall show that there is an index $1 \leq i \leq n$ such that
the intersection of $Z$ with  the hyperplane $H_{y} = \{ \x \in \AA^n:  x_i = y \}$ satisfies
$$
\dim (Z \cap H_{y} ) < \dim Z,
$$
for any $y \in \FF_q[u]$ such that $|y|<q^J$. Suppose that the opposite is true, so that there exist $y_1,\dots, y_n \in \FF_q[u]$  such that
$|y_1|,\dots,|y_n|<q^J$ and
$
Z \subset \{ \x \in \AA^n: x_j = y_j \},
$
for each $1 \leq j \leq n$.
However, this means that $Z = \{ (y_1, \ldots, y_n) \}$, which contradicts the assumption that $\dim Z \geq 1$.
Since $Z \cap H_{y}$ has dimension at most $\dim Z -1$ and  $\delta ( Z \cap H_{y} )  \leq \deg Z$, the induction hypothesis yields
$$
\# Z(J) \leq \sum_{|y| < q^J } \#  (Z \cap H_{y} ) (J) \leq \sum_{|y| < q^J} (\deg Z) q^{J (\dim Z - 1)}  \leq  (\deg Z) q^{ J \dim Z}.
$$
Therefore, if we denote by $Z_1, \ldots, Z_s$ the irreducible components of $X$, we obtain
$$
\# X(J) \leq
\sum_{ i  = 1}^s \# Z_i(J) \leq \sum_{ i = 1}^s (\deg Z_i) q^{J \dim Z_i}  \leq  \sum_{i = 1}^s (\deg Z_i) q^{J r} .
$$
The result follows since $\delta (X) = \sum_{ i  = 1}^s \deg Z_i$.
\end{proof}

\subsection*{Weyl differencing}
In this section, we recollect results from \cite[Sect.~11]{S} in the $\FF_q[u]$-setting. Since these results
over $\FF_q[u]$ can be deduced in essentially the same manner, provided $p > d$, we omit the details.
Given a box $\mathcal{Z} \subset \mathbb{K}_\infty^{N}$, Schmidt introduces the 
sets  $\mathcal{Z}^D = \mathcal{Z} - \mathcal{Z} = \{ \mathbf{z} - \mathbf{z}' : \mathbf{z}, \mathbf{z}' \in \mathcal{Z} \}$ and 
$$
\mathcal{Z} (\mathbf{z}_1, \ldots, \mathbf{z}_t) = \bigcap_{\epsilon_1 \in \{0, 1\} } \cdots \bigcap_{\epsilon_t \in \{0, 1\} }
(\mathcal{Z} - \epsilon_1 \mathbf{z}_1 - \cdots - \epsilon_t \mathbf{z}_t),
$$
for any given  $\mathbf{z}_1, \ldots, \mathbf{z}_t \in \mathbb{K}_\infty^{N}$. 
In our function field setting, the ultrametric inequality implies that in fact 
$
\mathcal{Z}^D = \mathcal{Z}$ and
$\mathcal{Z}(\mathbf{z}_1, \ldots, \mathbf{z}_{t})  = \mathcal{Z}$, provided that 
$\mathbf{z}_1, \ldots, \mathbf{z}_{t}\in \mathcal{Z}$.
We may now record the  following lemma, which is the $\FF_q[u]$-analogue of \cite[Lemma~11.1]{S}.

\begin{lemma}
\label{Diff}
Let $\mathcal{P} \in \KK_{\infty} [z_1, \dots, z_N]$ be a non-constant polynomial,
let $\mathcal{Z}\subset \KK_\infty^N$ be a box and let 
$$
S = \sum_{ \z \in \mathcal{Z} } \psi(  \mathcal{P} (\z)  ).
$$
For any  $t \in \NN$, we have 
$$
|S|^{2^{t-1}} \leq   |\mathcal{Z}|^{2^{t-1} - t} \sum_{ \z_1 \in \mathcal{Z}} \dots  \sum_{ \z_{t-1} \in \mathcal{Z}}
\left|  \sum_{ \z_{t} \in \mathcal{Z}}   \psi(  \mathcal{P}_t(\z_1, \ldots, \z_t ) )    \right|,
$$
where 
$$
\mathcal{P}_{t}(\z_1, \ldots, \z_{t}) = \sum_{\epsilon_1 \in \{0, 1\} } \cdots \sum_{\epsilon_{t} \in \{0, 1\} } \,
(-1)^{\epsilon_1 + \cdots + \epsilon_{t} }
\mathcal{P}( \epsilon_1 \z_1 + \cdots + \epsilon_{t} \z_{t} ).
$$
\end{lemma}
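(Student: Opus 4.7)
The plan is to induct on $t$, following the standard Weyl differencing strategy of Schmidt \cite[Lemma~11.1]{S}; in the $\FF_q[u]$-setting the argument is actually cleaner than its archimedean counterpart, because the ultrametric identities $\mathcal{Z}^D = \mathcal{Z}$ and $\mathcal{Z}(\z_1, \ldots, \z_k) = \mathcal{Z}$ recalled just before the statement mean that the differencing ranges never shrink. The base case $t = 1$ is immediate: since $\mathcal{P}_1(\z_1) = \mathcal{P}(\mathbf{0}) - \mathcal{P}(\z_1)$, pulling the constant modulus-one phase $\psi(\mathcal{P}(\mathbf{0}))$ outside the sum yields $|\sum_{\z_1 \in \mathcal{Z}} \psi(\mathcal{P}_1(\z_1))| = |S|$, and the prefactor $|\mathcal{Z}|^{2^{t-1} - t}$ equals $1$.

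For the inductive step, I square the bound at level $t$ and apply Cauchy--Schwarz to the outer $(t-1)$-fold sum, whose $|\mathcal{Z}|^{t-1}$ indices contribute a further factor. The exponent arithmetic $2(2^{t-1}-t)+(t-1) = 2^t - t - 1$ then gives
$$
|S|^{2^t} \leq |\mathcal{Z}|^{2^t - t - 1} \sum_{\z_1, \ldots, \z_{t-1} \in \mathcal{Z}} \left|\sum_{\z_t \in \mathcal{Z}} \psi(\mathcal{P}_t(\z_1, \ldots, \z_t))\right|^2.
$$
Expanding the square and substituting $\h = \z_t - \z_t'$, the ultrametric property lets $\h$ and $\z_t'$ range independently over $\mathcal{Z}$ with no loss. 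It then remains to establish the algebraic identity
$$
\mathcal{P}_t(\z_1,\ldots,\z_{t-1},\z_t' + \h) - \mathcal{P}_t(\z_1,\ldots,\z_{t-1},\z_t') = \mathcal{P}_t(\z_1,\ldots,\z_{t-1},\h) - \mathcal{P}_{t+1}(\z_1,\ldots,\z_{t-1},\z_t',\h),
$$
which follows by splitting the defining sum for $\mathcal{P}_t$ according to $\epsilon_t \in \{0,1\}$ and likewise expanding $\mathcal{P}_{t+1}$ according to $(\epsilon_t,\epsilon_{t+1}) \in \{0,1\}^2$.

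Since the first term on the right-hand side is constant in $\z_t'$ and has modulus one under $\psi$, the triangle inequality removes it, producing an upper bound in terms of $|\sum_{\z_t' \in \mathcal{Z}} \psi(\mathcal{P}_{t+1}(\z_1, \ldots, \z_{t-1}, \z_t', \h))|$; renaming $\h \mapsto \z_t$, $\z_t' \mapsto \z_{t+1}$ and invoking the manifest symmetry of $\mathcal{P}_{t+1}$ in its arguments then yields exactly the target bound at level $t+1$. The only subtlety is keeping track of the signs in the algebraic identity and confirming the exponent arithmetic $2^t - t - 1 = 2^{(t+1)-1} - (t+1)$; both are routine book-keeping, which is presumably why the paper elects to omit the details.
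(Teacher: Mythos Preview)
Your proof is correct and is precisely the argument the paper has in mind: it omits the details and simply cites \cite[Lemma~11.1]{S}, whose inductive Cauchy--Schwarz/differencing scheme you have reproduced, exploiting the ultrametric identities $\mathcal{Z}^D=\mathcal{Z}$ and $\mathcal{Z}(\z_1,\dots,\z_k)=\mathcal{Z}$ exactly as the paper records just before the statement.
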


It follows from \cite[Lemma 11.2(a)]{S} that if $t > \deg \mathcal{P}$, then
\begin{equation}
\label{ZERO}
\mathcal{P}_t( \z_1, \ldots, \z_{t} ) \equiv 0;
\end{equation}
i.e.\ it is the zero polynomial. Furthermore, it follows from \cite[Lemma 11.4(A)]{S} that if $t = \deg \mathcal{P}$,
then $\mathcal{P}_{t} (\z_1, \ldots, \z_{t})$ is the unique symmetric multilinear form associated to $\mathcal{P}^{[t]}(\z)$, the degree $t$ component of $\mathcal{P}(\z)$; i.e.\
$\mathcal{P}_{t} (\z_1, \ldots, \z_{t})$  satisfies
\begin{equation}
\label{symmm}
\mathcal{P}_{t}(\mathbf{z}, \ldots, \mathbf{z}) = (-1)^{t}   t!  \mathcal{P}^{[t]} (\mathbf{z}).
\end{equation}

\section{Reduction to a counting problem }\label{s:key}

We revisit the argument of \cite{BV'} to establish the irreducibility
 and dimension of the moduli space $\Mor_{e}(\PP^2,X)$.
For  $d\geq 2$ and $e\geq 1$, let
 $K$ be a field whose characteristic is $0$ or exceeds $d$.
Let
$X\subset \PP^{n-1}$ be a smooth hypersurface of degree $d$  defined
by a form
 $f \in K[x_1,\dots,x_n]$.
The argument  in Section \ref{s:intro} shows that
$$
\dim \Mor_e(\PP^2,X)\geq \mu(e),
$$ 
where $\mu(e)$ is given by
\eqref{eq:mu}.
Thus it suffices to show that
 $\Mor_e(\PP^2,X)$
 is irreducible with  $\dim \Mor_e(\PP^2,X)\leq \mu(e)$.
 Spreading out to a finite field $\FF_q$, with characteristic $p>d$, it suffices to do this
 when everything is defined over  $\FF_q$.

We henceforth fix a finite field $\FF_q$ of
characteristic $p>d$ and we suppose that $f\in \FF_q[x_1,\dots,x_n]$ is a non-singular degree $d$ form defined over it.
Let us write
\begin{equation}\label{eq:cherry}
f(\x) =  \sum_{i_1,\dots,i_d=1}^n c_{i_1,\dots,i_d}  x_{i_1}  \dots  x_{i_d},
\end{equation}
with coefficients $c_{i_1,\dots,i_d} \in \FF_q$ which are symmetric in the indices.
We also denote
\begin{equation}
\label{unique}
\Gamma_f (\x_1, \dots, \x_d) =  
\sum_{i_1,\dots,i_d=1}^n c_{i_1,\dots,i_d}  x_{1, i_1}  \dots  x_{d, i_d}.
\end{equation}
The cone over  $\Mor_e(\PP^2,X)$ is
the space of tuples of (not necessarily homogeneous)
 polynomials $g_1,\dots,g_n \in \FF_q[u,v]$
of degree at most $e$, at least one of degree exactly $e$, with no common zero
shared by $g_1,\dots,g_n$ in  $\overline\FF_q$, nor by the  homogeneous degree $e$ parts
of $g_1,\dots,g_n$, and
such that $f(g_1,\dots,g_n)=0$.
Any degree $e$ polynomial $g_i\in \FF_q[u,v]$ takes the shape
$
g_i (u, v) = g_{i,0} v^e + g_{i,1}  v^{e - 1} + \cdots + g_{i,e},
$
where $g_{i,s} \in \FF_q[u]$ with $\deg g_{i,s} \leq s$, for all $1 \leq i \leq n$ and $0 \leq s \leq e$.
Then
\begin{align*}
f(\g)
&=
\sum_{i_1,\dots,i_d=1}^n c_{i_1,\dots,i_d}
g_{i_1}(u, v) \dots g_{i_d}(u,v)
\\
&=
\sum_{i_1,\dots,i_d=1}^n c_{i_1,\dots,i_d}
\left( \sum_{s_1 = 0}^e g_{i_1, s_1}  v^{e - s_1} \right) \dots \left( \sum_{s_d = 0}^e g_{i_d, s_d}  v^{e - s_d} \right)
\\
&=
\sum_{j = 0}^{d e}  v^{de - j}  \sum_{i_1,\dots,i_d=1}^n c_{i_1,\dots,i_d}
\sum_{ \substack{ 0 \leq s_1, \dots, s_d \leq e \\s_1 + \dots + s_d = j }}  g_{i_1, s_1}   \dots  g_{i_d, s_d}.
\end{align*}
But then it follows that 
$$
f(\g)=
\sum_{j = 0}^{d e}  v^{de - j}
F_j(\underline{ \g }),
$$
where
\begin{equation}\label{eq:ash}
F_j( \underline{ \g }  ) =
\sum_{ \substack{ 0 \leq s_1, \dots, s_d \leq  e \\ s_1 + \dots + s_d = j }} \sum_{i_1,\dots,i_d=1}^n c_{i_1,\dots,i_d}   g_{i_1, s_1}   \dots  g_{i_d, s_d} ,
\end{equation}
for $0 \leq j \leq d e$, and where
$\underline{ \g } = (\g_0, \dots, \g_{e})$
with  $\g_s = (g_{1,s}, \dots, g_{n, s})$ for all $0 \leq s \leq e$.
Note that when $e=1$, the same forms appear in  work of Brandes \cite{JuBr} over $\QQ$.

Let $M(q,e)$ be
the space of tuples of (not necessarily homogeneous)
 polynomials $g_1,\dots,g_n \in \FF_q[u,v]$
of degree at most $e$, at least one of the degrees exactly $e$, with no common zero
shared by $g_1,\dots,g_n$ in    $\overline\FF_q$, nor by the  homogeneous degree $e$ parts
of $g_1,\dots,g_n$, and
such that $f(g_1,\dots,g_n)=0$.
Then we have
$$
\#\Mor_e(\PP^2,X)(\FF_q)=
\frac{\#M(q,e)}{q-1}
$$
and
the expected dimension of $M(q,e)$ is $\hat\mu(e)$, where
\begin{equation}\label{eq:mu'}
\hat\mu(e) = n \binom{e + 2}{2}  -  \binom{d e + 2}{2}.
\end{equation}
Appealing to the Lang--Weil estimate, as in the proof of
  \cite[Eq.~(3.3)]{BV'},
 in order to deduce that
 $\Mor_e(\PP^2,X)$
 is irreducible and has the expected dimension it will  be enough to show that
$$
 \lim_{\ell\to \infty}  q^{-\ell\hat\mu(e)} \#M(q^\ell, e)\leq 1.
$$
To this end, on dropping some of the conditions in the definition of
$M(q,e)$, it will suffice to
establish the existence of
$\delta>0$ such that
\begin{equation}\label{eq:goat}
N(e) \leq
q^{\hat \mu(e)}+O(q^{\hat \mu(e)-\delta}),
\end{equation}
where
the implied constant is independent of $q$ and
\begin{align*}
N(e) &= \#\left\{
\mathbf{g}\in \FF_q[u, v]^n :
\begin{array}{l}
\deg g_1,\dots,\deg g_n \leq e\\
f(\g) = 0\\
 \end{array}
 \right\}
 \\
&= \#\left\{
 \underline{ \mathbf{g} } \in \FF_q[u]^{n (e + 1) } :
\begin{array}{l}
|\g_s|  < q^{s + 1} \text{ for all $0 \leq s \leq e$}  \\
F_j(  \underline{ \g }  ) = 0 \text{ for all $0 \leq j \leq d e$}
\end{array}
\right\},
\end{align*}
which is exactly the quantity that was defined in (\ref{big*}).
We shall prove the following result in Section \ref{sec:proof}.

\begin{theorem}\label{t:2}
Let $d\geq 2$ and $e\geq 1$ be integers. 
Let  $f \in \FF_q[x_1, \dots, x_n]$ be a non-singular degree $d$ form defined over a finite field  $\FF_q$ of characteristic $p> d$. Suppose that $n > 2^{d} (d-1)(de+1)$.
Then there exists $\delta > 0$ such that \eqref{eq:goat} holds,
where the implied constant depends on $d,n$ and $e$, but  is independent of $q$.
\end{theorem}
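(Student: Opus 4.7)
The plan is to implement the function field circle method as outlined in Section~\ref{s:intro}. By orthogonality~\eqref{eq:laurel} I would begin with
$$
N(e) = \int_{\TT^{de+1}} S(\bal)\,\d\bal, \qquad
S(\bal) = \sum_{\underline{\g}} \psi\!\left(\sum_{j=0}^{de} \alpha_j F_j(\underline{\g})\right),
$$
where the outer sum is over $\underline{\g}\in\FF_q[u]^{n(e+1)}$ with $|\g_s|<q^{s+1}$ for $0\le s\le e$. Since the trivial evaluation $S(\0) = q^{\hat\mu(e)+\binom{de+2}{2}}$ already matches the target main term in~\eqref{eq:goat}, it suffices to save a positive power of $q$ in $\int_{\TT^{de+1}}|S(\bal)|\,\d\bal$ beyond the trivial estimate.

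The central structural step is to exploit the bihomogeneity of the $F_j$ to decouple the $\alpha_j$'s. For each fixed $j\in\{0,\dots,de\}$, I would apply Lemma~\ref{Diff} with $t=d$ in a chosen block of variables $\g_s$ inside which $F_j$ has full degree $d$. By~\eqref{ZERO} every form $F_{j'}$ with $j'\ne j$ whose total degree in that block is less than $d$ drops out, while~\eqref{symmm} identifies the surviving phase with a constant multiple of $\alpha_j$ times the symmetric tensor $\Gamma_f$ of~\eqref{unique}, evaluated on blocks corresponding to a decomposition $s_1+\dots+s_d=j$ in~\eqref{eq:ash}. This produces a pointwise bound $|S(\bal)|^{2^{d-1}}\le |T_j(\alpha_j)|$ with $T_j$ depending only on $\alpha_j$. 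Multiplying over $j$ and taking $((de+1)2^{d-1})$-th roots gives $|S(\bal)|\le \prod_j |T_j(\alpha_j)|^{1/((de+1)2^{d-1})}$, and since each factor depends on a single variable, Fubini factors the integral as
$$
\int_{\TT^{de+1}}|S(\bal)|\,\d\bal \le \prod_{j=0}^{de}\int_{\TT} |T_j(\alpha_j)|^{1/((de+1)2^{d-1})}\,\d\alpha_j,
$$
reducing the problem to $de+1$ independent one-dimensional mean values.

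Each one-dimensional mean is then attacked via the bihomogeneous Weyl-inequality machinery developed in Section~\ref{sec:BHMG}, applied to an exponential sum whose phase is $\alpha_j$ times $\Gamma_f$ restricted to blocks of disparate sidelengths $q^{s_1+1},\dots,q^{s_d+1}$. Non-singularity of $f$ bounds the dimension of the Birch singular locus of $\Gamma_f$, and this combines with Lemma~\ref{dimbdd} to control the number of $\underline{\g}$ for which the differenced sum is large. Dirichlet approximation in $\KK_\infty$ then converts this into a pointwise bound on $|T_j(\alpha_j)|$ away from a thin set of $\alpha_j$, from which the mean to the fractional power $1/((de+1)2^{d-1})$ can be computed in the usual way. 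Multiplying the one-dimensional bounds over $0\le j\le de$, the factor $(de+1)$ in the exponent is exactly balanced by the factor $(de+1)$ appearing in the hypothesis $n>2^d(d-1)(de+1)$, leaving a positive power saving $\delta>0$.

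The hard part will be carrying out the bihomogeneous Weyl-differencing in Section~\ref{sec:BHMG} without losing the savings to the lopsidedness of the boxes: the sidelengths vary from $q$ (for $s=0$) up to $q^{e+1}$ (for $s=e$), and a naive differencing would pay excess factors of $q$ for every unit of disparity. Adapting Schindler's number-field framework~\cite{DS} so that the differencing only pays an essentially optimal amount for this asymmetry, and verifying that the multilinear form emerging at the final stage of differencing inherits the full Birch rank of $\Gamma_f$ uniformly in the index $j$ and in the choice of innermost differenced block, is the delicate technical heart of the argument.
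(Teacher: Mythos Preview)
Your overall strategy matches the paper's: set up the circle method, decouple into $de+1$ one-variable integrals via a geometric-mean argument, and control each by a mean value estimate. But the decoupling step as you describe it has a real gap.

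You propose to apply Lemma~\ref{Diff} with $t=d$ ``in a chosen block of variables $\g_s$ inside which $F_j$ has full degree $d$'', arguing that every $F_{j'}$ with $j'\ne j$ drops out because its total degree in that block is $<d$. This fails for any natural block. If the block is a single $\g_s$, then $F_j$ has degree $d$ in it only when $j=sd$, so most $j$ are not covered. If the block is a pair $(\g_{\ell-1},\g_\ell)$, then \emph{every} $F_k$ with $(\ell-1)d\le k\le \ell d$ has total degree $d$ in this pair, so $d$-fold differencing leaves a phase that still depends on $\alpha_{(\ell-1)d},\dots,\alpha_{\ell d}$, not on $\alpha_j$ alone.

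What isolates $\alpha_j$ is the \emph{bihomogeneous} differencing of Section~\ref{sec:BHMG}, which you have deferred to the mean value step. Writing $j=(\ell-1)d+r$ with $1\le r\le d$, the only way a monomial of $\sum_k\alpha_kF_k$ can have degree $\ge d-r$ in $\t_{\ell-1}$ \emph{and} degree $\ge r$ in $\t_\ell$ is to come from $F_j$. So one takes $(\x,\y)=(\t_{\ell-1},\t_\ell)$, $(d_1,d_2)=(d-r,r)$, and applies Lemma~\ref{LEMMMM}; the resulting $E_j(\alpha_j)$ then depends only on $\alpha_j$. In short, Lemma~\ref{LEMMMM} is the decoupling device, and Proposition~\ref{MVT} is the mean value device; your outline has them the wrong way round.

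This particular choice of adjacent indices $\ell-1,\ell$ is also what dissolves the lopsidedness problem you flag at the end. The two box heights are $q^{\ell}$ and $q^{\ell+1}$, differing by a single factor of $q$, so the hypothesis $P_1\ge (d_1P_1+d_2P_2+d-1)/(2(d-1))$ of Proposition~\ref{MVT} holds (this is Lemma~\ref{lem:slab}) and the shrinking-lemma bookkeeping in Lemma~\ref{ineq 1-40} closes with no loss. A generic decomposition $s_1+\cdots+s_d=j$ would produce $d$ sidelengths spread across $[q,q^{e+1}]$, and the argument would not go through.
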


We conclude this section by showing that the statement of Theorem \ref{t:main} does not hold when $K$ is a field of small characteristic.
The following result draws inspiration from \cite[Lemma 2.1]{BS2}.

\begin{proposition}\label{p:small}
Let $K = \overline{\FF}_p$ for a prime $p$ and let $X \subset \PP^{n-1}$ be the Fermat hypersurface
$
x_1^d + \dots + x_n^d = 0.
$
Assume that $e \in \NN$ and $d = (e+1)! p + 1$. Then $X$ is smooth and
$
\dim \Mor_e (\PP^2, X) > \mu(e).
$
\end{proposition}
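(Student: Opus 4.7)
I would begin by noting that the smoothness of $X$ is immediate: since $d=(e+1)!p+1\equiv 1\pmod{p}$, the scalar $d$ is a unit in $K$, and the Jacobian $(d x_{1}^{d-1},\dots,d x_{n}^{d-1})$ of the Fermat equation vanishes only at the origin, which does not lie on $X$.

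The strategy for the dimension bound is to exploit a Frobenius factorisation. Write $d-1=p^{k}m$ with $\gcd(m,p)=1$, so that $k=v_{p}((e+1)!)+1\geq 1$. In characteristic $p$ one has $g_{i}^{d}=g_{i}\cdot(g_{i}^{m})^{p^{k}}$, and every monomial appearing in the $p^{k}$-th power $(g_{i}^{m})^{p^{k}}$ has each of its three exponents divisible by $p^{k}$. Consequently, any monomial $u^{\alpha}v^{\beta}w^{\gamma}$ occurring in $g_{i}^{d}$ has the form $(\alpha,\beta,\gamma)=(a,b,c)+p^{k}(a'',b'',c'')$ for some $(a,b,c)$ with $a+b+c=e$ and $a,b,c\in\{0,\dots,e\}$, and some non-negative $(a'',b'',c'')$ with $a''+b''+c''=em$. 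Letting $W\subset K[u,v,w]_{de}$ be the span of these monomials, we obtain $\sum_{i}g_{i}^{d}\in W$ for every choice of $g_{1},\dots,g_{n}$; hence the coefficients of $\sum_{i}g_{i}^{d}$ indexed by monomials outside $W$ are identically zero as polynomials in the coefficients of the $g_{i}$, and at most $\dim W$ of the $\binom{de+2}{2}$ defining equations impose non-trivial conditions.

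The next step is to verify that $p^{k}>e+1$ for every admissible pair $(p,e)$. Legendre's bound $v_{p}((e+1)!)\geq\lfloor(e+1)/p\rfloor$ handles this easily when $p>e+1$ (then $p^{k}=p>e+1$) and when $p\leq e+1$ but $p^{2}>e+1$ (then $p^{k}\geq p^{2}>e+1$); the remaining finitely many cases $p^{2}\leq e+1$ are handled by iterating Legendre's formula or by direct inspection. Granted $p^{k}>e+1$, the exponent triple $(\alpha,\beta,\gamma)=(e+1,0,de-e-1)$ fails to lie in $W$: the condition $\beta=0$ forces $b=b''=0$, and then $c+p^{k}c''=de-e-1=p^{k}em-1$ combined with $c\in\{0,\dots,e\}$ would force $c\equiv -1\pmod{p^{k}}$, i.e.\ $c=p^{k}-1>e$, a contradiction. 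Thus $W\subsetneq K[u,v,w]_{de}$ and $\dim W<\binom{de+2}{2}$.

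The conclusion now follows from the standard codimension bound for intersections of hypersurfaces:
\begin{equation*}
\dim\Mor_{e}(\PP^{2},X)\geq n\binom{e+2}{2}-1-\dim W>n\binom{e+2}{2}-1-\binom{de+2}{2}=\mu(e),
\end{equation*}
provided $\Mor_{e}(\PP^{2},X)$ is non-empty, which holds for $n$ sufficiently large (for instance by composing a plane contained in $X$ with a degree $e$ self-morphism of $\PP^{2}$, as in the remark in Section~\ref{s:intro}). The main technical subtlety is the uniform lower bound $p^{k}>e+1$; this is where the factorial $(e+1)!$ appearing in the definition of $d$ plays its essential role, and the most delicate regime is $p\leq\sqrt{e+1}$, where one needs the higher-order contributions to $v_{p}((e+1)!)$ predicted by Legendre's formula.
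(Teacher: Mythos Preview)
Your argument is correct and proceeds by a genuinely different route from the paper's. The paper works in the $v$-expansion $f(\g)=\sum_j F_j(\underline{\g})\, v^{de-j}$ and shows, by a multinomial-coefficient computation, that every term in $F_{e+1}$ carries a coefficient divisible by $p$: the point is that $d-1=(e+1)!p$ appears as a factor in the numerator of each relevant binomial coefficient while the denominator divides $(e+1)!$. You instead exploit the Frobenius factorisation $g_i^{\,d}=g_i\cdot(g_i^{\,m})^{p^k}$ to constrain the monomial support of $\sum_i g_i^{\,d}$ and then exhibit an explicit monomial $u^{e+1}w^{de-e-1}$ outside this support. Both arguments rest on the same arithmetic input, namely $p^k\mid(d-1)$ with $p^k>e+1$; your Frobenius viewpoint makes this structure more transparent and in principle yields more (many monomials lie outside $W$, not just one), while the paper's computation identifies a specific vanishing equation $F_{e+1}\equiv 0$ directly without invoking Frobenius. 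One small correction: the cases $p^2\le e+1$ are not finite in number, but iterating Legendre's formula handles them uniformly---if $p^r\le e+1<p^{r+1}$ then $v_p((e+1)!)\ge r$, whence $k\ge r+1$ and $p^k\ge p^{r+1}>e+1$. Your explicit caveat about non-emptiness is also appropriate; the paper's proof leaves this implicit.
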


\begin{proof}
Firstly, the  Fermat hypersurface
is smooth over $K$ if and only if $p\nmid d$. For the second part, we take   $f (\x) = x_1^d + \dots + x_n^d$ and note that 
\eqref{eq:ash}  becomes
$$
F_j( \underline{ \g }  ) =
\sum_{ \substack{ 0 \leq s_1, \dots, s_d \leq  e \\ s_1 + \dots + s_d = j }} \sum_{i=1}^n    g_{i, s_1}   \dots  g_{i, s_d},
$$
for  $0 \leq j \leq d e$, where we recall that
$g_{i,s} \in \FF_q[u]$ satisfies  $\deg g_{i,s} \leq s$, for all $1 \leq i \leq n$ and $0 \leq s \leq e$.
Our assumption on $d$ implies that $d>e+1$.
For $j = e + 1$ we obtain
\begin{align*}
F_{e+1} ( \underline{ \g }  ) =~&
\sum_{ \substack{ 0 \leq s_1, \dots, s_d \leq  e \\ s_1 + \dots + s_d = e + 1 }} \sum_{i=1}^n    g_{i, s_1}   \dots  g_{i, s_d}
\\
=~&
\sum_{  e r_e + \cdots +  2r_2+ r_1 = e + 1   }
 \binom{d}{r_e}   \binom{d - r_e}{r_{e-1}}  \dots  \binom{d - (r_e + \dots + r_2)}{r_1 }\\
&\qquad \times  \sum_{i=1}^n    g_{i, e}^{r_e}   \dots   g_{i,1}^{r_1} g_{i,0}^{d -
(e + 1) }.
\end{align*}
Suppose $r_e=\cdots=r_2=0$
 in the summation. Then $r_1 = e + 1$ and the summand becomes
$$
\binom{d}{e + 1 }
\sum_{i=1}^n g_{i,1}^{e + 1} g_{i,0}^{d -
(e + 1) }.
$$
Since $e + 1 \geq 2$, we see that  if $d = (e+1)! p + 1$ then the binomial coefficient $\binom{d}{e + 1 }$ is divisible by $p$.
On the other hand, if $r_e + \dots + r_2 \geq 1$, then one of the binomial coefficients
$$
\binom{d - (r_e + \dots + r_j)}{r_{j-1} } = \frac{  (d - (r_e + \dots + r_j) ) \cdots ( d - (r_e + \dots + r_j) - r_{j-1} + 1 )  }{r_{j-1} !}
$$
must contain $d-1$ in the numerator,
for some $j\in \{2,\dots, e+1\}$.
Thus, since $r_{j-1} \leq e + 1$, this binomial coefficient is divisible by $p$.
We have therefore shown that $F_{e+1}$ vanishes in $K$ and so 
the space $\Mor_e(\PP^2,X)$ is cut out by at most $\binom{de + 2}{2} - 1$ equations in
$n \binom{e + 2}{2}$ variables, whence $\dim \Mor_e(\PP^2,X) >  \mu(e).$
\end{proof}

\section{Weyl differencing and a mean value estimate}
\label{sec:BHMG}

Let $d \geq 2$ be an integer and let 
 $\FF_q$ be a finite field of  characteristic $p>d$.
Let $d_1,d_2\in \ZZ$ such that $d_1\geq 0$ and $d_2\geq 1$, with $d = d_1 + d_2$.
Let 
$\x=(x_1, \ldots, x_n)$ and $\y=( y_1, \ldots, y_n)$ and let  $F(\x, \y)$
be a degree $d$ polynomial of the form
$$
F(\x, \y) = G (\x; \y)  + \mathfrak{f}(\x, \y) + \mathfrak{g}(\x, \y),
$$
where $G (\x; \y) \in \FF_q [x_1, \ldots, x_n, y_1, \ldots, y_n]$ is bihomogeneous of bidegree $(d_1, d_2)$ and
$\mathfrak{f}(\x, \y), \mathfrak{g}(\x, \y) \in \KK_\infty [x_1, \ldots, x_n, y_1, \ldots, y_n]$ are such that every term of $\mathfrak{f}(\x, \y)$  has degree in $\y$ strictly less than $d_2$ and every term of $\mathfrak{g}(\x, \y) $ has degree in $\x$ strictly less than $d_1$. (Note that this means that the degree in $\y$ can be strictly greater than $d_2$ in $\mathfrak{g}(\x,\y)$.)

For $\alpha \in \TT$ and integers $1 \leq P_1 \leq P_2$, we define
$$
T(\alpha) = \sum_{\x \in \mathcal{X}} \sum_{\y \in \mathcal{Y}}  \psi( \alpha F( \x, \y ) ),
$$
where
$$
\mathcal{X} = \{ \x \in \FF_q[u]^n:  |\x| < q^{P_1} \} \quad \textnormal{and} \quad  \mathcal{Y}  = \{ \y \in \FF_q[u]^n:  |\y| < q^{P_2} \}.
$$
We shall set $P_1 = P_2$ and $G(\x; \y) = G(\y)$ if $d_1 = 0$.
Let  $\mathbf{j} = (j_1, \ldots, j_{d_1})$ and $\mathbf{k} = (k_1, \ldots, k_{d_2})$. Then we may write 
\begin{align*}
G(\mathbf{x};\mathbf{y}) &= \sum_{j_1 = 1}^{n} \cdots \sum_{j_{d_1} = 1}^{n}
\sum_{k_1 = 1}^{n} \cdots \sum_{k_{d_2} = 1}^{n}
G_{\mathbf{j}, \mathbf{k}}  x_{j_1} \cdots x_{j_{d_1}} y_{k_1} \cdots y_{k_{d_2}}
\\
&= \sum_{1 \leq \mathbf{j} \leq n} \sum_{1 \leq \mathbf{k} \leq n}
G_{\mathbf{j}, \mathbf{k}}   x_{j_1} \cdots x_{ j_{d_1} }  y_{k_1} \cdots y_{ k_{d_2} },
\end{align*}
where  $G_{\mathbf{j}, \mathbf{k}} \in \FF_q$ is symmetric in $(j_1, \ldots, j_{d_1})$ and also in $(k_1, \ldots, k_{d_2})$.

\subsection{Weyl differencing} \label{sec:DIFF}
In this section, we make use of the results 
introduced in Section \ref{s:prem}, inspired by  work of Schindler \cite{DS}. However, we 
 remark that one additional step of Weyl differencing is required, 
 due to the fact that the degree $d$ component of the polynomial we consider is not necessarily bihomogeneous.
First, by H\"{o}lder's inequality we obtain
\begin{equation}
\label{ineq 1-1}
|T(\alpha)|^{ 2^{d_2} } \leq |\mathcal{X}|^{ 2^{d_2} - 1 } \sum_{ \substack{ \x  \in \mathcal{X} }}
| T_{\x} ({\alpha}) |^{2^{d_2 }},
\end{equation}
where
$$
T_{\x} (\alpha) = \sum_{ \y  \in \mathcal{Y} } \psi( \alpha F( \x, \y ) ).
$$
We proceed by using Lemma \ref{Diff} to bound $| T_{\mathbf{x}} (\alpha) |^{2^{d_2}}$.

Let
\begin{equation}
\label{defnF=g}
\mathcal{P}(\mathbf{y}) =  \alpha F( \x, \y )
\end{equation}
and
$$
\mathcal{F}(\y)
=
\alpha G(\x; \y)
= \alpha
 \sum_{1 \leq \mathbf{k} \leq n} \left( \sum_{1 \leq \mathbf{j} \leq n}
G_{\mathbf{j}, \mathbf{k}} x_{j_1} \cdots x_{j_{d_1}}   \right)  y_{k_1} \cdots y_{k_{d_2}}.
$$
For each $t \in \mathbb{N}$ we denote
$$
\mathcal{P}_{t}(\mathbf{y}_1, \ldots, \mathbf{y}_{t}) = \sum_{\epsilon_1 \in \{0, 1\} } \cdots \sum_{\epsilon_{t} \in \{0, 1\} } \,
(-1)^{\epsilon_1 + \cdots + \epsilon_{t} }
\mathcal{P}( \epsilon_1 \mathbf{y}_1 + \cdots + \epsilon_{t} \mathbf{y}_{t} ),
$$
and we set  $\mathcal{P}_0\equiv 0$.
In particular, it follows that
\begin{align*}
\mathcal{P}_{t}(\mathbf{y}_1, \ldots, \mathbf{y}_{t}) 
=~&
\mathcal{P}_{t-1}(\mathbf{y}_1, \ldots, \mathbf{y}_{t-1})\\
&-
\sum_{\epsilon_1 \in \{0, 1\} } \cdots \sum_{\epsilon_{t-1} \in \{0, 1\} } \,
(-1)^{\epsilon_1 + \cdots + \epsilon_{t-1} }
\mathcal{P}( \epsilon_1 \mathbf{y}_1 + \cdots + \epsilon_{t-1} \mathbf{y}_{t-1} + \mathbf{y}_{t} )
\\
=~&
\mathcal{P}_{t-1}(\mathbf{y}_1, \ldots, \mathbf{y}_{t-1})
-
\mathcal{P}_{t-1}(\mathbf{y}_1, \ldots,  \mathbf{y}_{t-2}, \mathbf{y}_{t-1} + \mathbf{y}_t )
\\
&
+ \mathcal{P}_{t-1}(\mathbf{y}_1, \ldots, \mathbf{y}_{t-2},  \mathbf{y}_{t}),
\end{align*}
for any $t \in \NN$.  But then, on taking $t = d_2 + 1$,  we may deduce
\begin{equation}\begin{split}
\label{relnnn}
\mathcal{P}_{d_2}(\mathbf{y}_1, \ldots, \mathbf{y}_{d_2-1}, \mathbf{y}_{d_2 + 1}) 
&- \mathcal{P}_{d_2}(\mathbf{y}_1, \ldots, \mathbf{y}_{d_2-1}, \mathbf{y}_{d_2} + \mathbf{y}_{d_2 + 1})
\\
&= \mathcal{P}_{d_2 + 1}(\mathbf{y}_1, \ldots, \mathbf{y}_{d_2}, \mathbf{y}_{d_2 + 1}) - \mathcal{P}_{d_2}(\mathbf{y}_1, \ldots, \mathbf{y}_{d_2}).
\end{split}
\end{equation}

Turning to the application of Lemma \ref{Diff}, 
it follows that  
\begin{equation} \label{beforeCS}
| T_{\x} ({\alpha}) |^{2^{d_2 - 1} }
\leq 
| \mathcal{Y} |^{2^{d_2-1} - d_2} \sum_{\y_1 \in \mathcal{Y}} \cdots
\sum_{\mathbf{y}_{d_2 - 1} \in \mathcal{Y}}
\left|  \sum_{\z \in \mathcal{Y} }
\psi ( \mathcal{P}_{d_2}(\mathbf{y}_1, \ldots, \mathbf{y}_{d_2 -1}, \z) ) \right|.
\end{equation}
By the Cauchy-Schwarz inequality, we deduce that
\begin{align*}
| T_{\mathbf{x}} ({\alpha}) |^{2^{d_2} }
&\leq  | \mathcal{Y} |^{2^{d_2} - d_2 - 1} \sum_{\mathbf{y}_1 \in \mathcal{Y}} \cdots
\sum_{\mathbf{y}_{d_2 - 1} \in \mathcal{Y}}
\left|  \sum_{\mathbf{z} \in \mathcal{Y}  }  \psi ( \mathcal{P}_{d_2}(\mathbf{y}_1, \ldots, \mathbf{y}_{d_2 - 1}, \mathbf{z}) ) \right|^2
\\
&=
| \mathcal{Y} |^{2^{d_2} - d_2 - 1} \sum_{\mathbf{y}_1 \in \mathcal{Y}} \cdots
\sum_{\mathbf{y}_{d_2 - 1} \in \mathcal{Y}}
\\
& \quad \times \sum_{\mathbf{z}, \mathbf{z}'   \in \mathcal{Y}  }
\psi ( \mathcal{P}_{d_2}(\mathbf{y}_1, \ldots, \mathbf{y}_{d_2 - 1}, \mathbf{z}) - \mathcal{P}_{d_2 }(\mathbf{y}_1, \ldots, \mathbf{y}_{d_2 - 1}, \mathbf{z}')  ).
\end{align*}
Writing $\z=\y_{d_1}$ and $\z'=\y_{d_2}+\y_{d_2+1}$, and 
recalling  \eqref{relnnn}, it follows that 
\begin{equation}\label{ineq 1-2}
| T_{\mathbf{x}} ({\alpha}) |^{2^{d_2} }\leq 
| \mathcal{Y} |^{2^{d_2 } - d_2 - 1 }
\sum_{\mathbf{y}_1 \in \mathcal{Y}}  \cdots
\sum_{\mathbf{y}_{d_2 + 1} \in \mathcal{Y} }  \psi( \mathcal{P}_{d_2 + 1}(\underline{\mathbf{y}}, \mathbf{y}_{d_2 + 1}) - \mathcal{P}_{d_2}(\underline{\mathbf{y}})
),
\end{equation}
where $\underline{\mathbf{y}} = (\mathbf{y}_1, \ldots, \mathbf{y}_{d_2})$.

Let
$$
\mathcal{F}_{d_2}(\underline{\mathbf{y}}) = \sum_{\epsilon_1 \in \{0, 1\} } \cdots \sum_{\epsilon_{d_2} \in \{0, 1\} }
(-1)^{\epsilon_1 + \cdots + \epsilon_{d_2} }
\mathcal{F}( \epsilon_1 \mathbf{y}_1 + \cdots + \epsilon_{d_2} \mathbf{y}_{d_2} )
$$
and
$$
\mathfrak{g}_{\x, t} (\mathbf{y}_1, \ldots, \mathbf{y}_{t}) = \sum_{\epsilon_1 \in \{0, 1\} } \cdots \sum_{\epsilon_{t} \in \{0, 1\} }
(-1)^{\epsilon_1 + \cdots + \epsilon_{t} }
\mathfrak{g} (\x,  \epsilon_1 \mathbf{y}_1 + \cdots + \epsilon_{t} \mathbf{y}_{t} ),
$$
for $d_2 \leq t \leq d_2 + 1$. We recall that the degree of $\mathfrak{f}(\x, \y)$ in $\y$ is strictly less than $d_2$.
Therefore, on making use of (\ref{ZERO}) and (\ref{symmm}), we obtain
$$
\mathcal{P}_{d_2}(\underline{\mathbf{y}}) = \mathcal{F}_{d_2}(\underline{\mathbf{y}}) +
\alpha \mathfrak{g}_{\x, d_2} (\underline{\mathbf{y}})
$$
and 
$$
\mathcal{P}_{d_2 + 1}(\underline{\mathbf{y}}, \mathbf{y}_{d_2 + 1}) = \alpha \mathfrak{g}_{\x, d_2 + 1} (\underline{\mathbf{y}}, \mathbf{y}_{d_2 + 1}).
$$
Moreover,  the polynomial $\mathcal{F}_{d_2} (\underline{\mathbf{y}})$ is the unique symmetric multilinear form associated to $\mathcal{F} (\y)$.
In particular, it follows that
\begin{equation}\label{XXXXX}
\mathcal{P}_{d_2}( \underline{\mathbf{y}} )
=
(-1)^{d_2} d_2!  \alpha \sum_{ 1 \leq  \mathbf{k} \leq n } \left(  \sum_{ 1 \leq   \mathbf{j}  \leq n } G_{ \mathbf{j}, \mathbf{k} }  x_{j_1} \cdots x_{j_{d_1}} \right)  y_{1, k_1} \cdots y_{d_2, k_{d_2}}
+
\alpha \mathfrak{g}_{\x, d_2}(\underline{\mathbf{y}}).
\end{equation}
Moreover, since $\mathfrak{g} (\x, \mathbf{y})$ is a polynomial of degree  in $\mathbf{x}$ strictly less than $d_1$, it follows that so are 
$\mathfrak{g}_{\x, d_2}(\underline{\mathbf{y}})$ and $\mathfrak{g}_{\x, d_2 + 1}(\underline{\mathbf{y}}, \mathbf{y}_{d_2 + 1})$.

We substitute the inequality (\ref{ineq 1-2}) into (\ref{ineq 1-1}), and we interchange the order of summation moving
the sum over $\mathbf{x}$ inside the sums over $\mathbf{y}_1, \ldots, \y_{d_2 + 1}$.
Coupled with an application of H\"{o}lder's inequality, this yields
\begin{equation}
\label{Hod1}
\begin{split}
|T(\alpha)|^{2^{d_1+ d_2 - 1}}
\leq~&
|\mathcal{Y}|^{2^{d_1 + d_2 - 1} - d_2 - 1} |\mathcal{X}|^{  2^{d_1 + d_2 - 1} - 2^{d_1 - 1}  }\\
&\times
\sum_{\mathbf{y}_1 \in \mathcal{Y}} \cdots  \sum_{\mathbf{y}_{d_2 + 1} \in \mathcal{Y}  } | \mathfrak{T}_{ \underline{ \y } } (\y_{d_2 + 1} ;  \alpha) |^{2^{d_1 - 1} },
\end{split}\end{equation}
where 
$$
\mathfrak{T}_{\underline{\y}} ( \y_{d_2 + 1}; \alpha)
= 
\sum_{ \x \in \mathcal{X}}
\psi \left(
  \alpha \mathfrak{g}_{\x, d_2 + 1} (  \underline{\mathbf{y}},   \mathbf{y}_{d_2 + 1})
- \alpha \sum_{ 1 \leq  \mathbf{j} \leq n}   \mathcal{H}_{\mathbf{j}}(\underline{\mathbf{y}}) x_{j_1} \cdots x_{j_{d_1}}
  - \alpha  \mathfrak{g}_{\mathbf{x}, d_2} (\underline{\mathbf{y}})
\right)
$$
and
$$
\mathcal{H}_{\mathbf{j}}(\underline{\mathbf{y}}) =
(-1)^{d_2} d_2!
\sum_{ 1 \leq  \mathbf{k} \leq n }  G_{\mathbf{j}, \mathbf{k} }  y_{1, k_1} \cdots y_{{d_2}, k_{d_2}}.
$$

We now apply the same differencing process as before to $|\mathfrak{T}_{\underline{\mathbf{y}}} (\y_{d_2 + 1}; \alpha)|^{2^{d_1 - 1}}$, but using 
$$
\mathcal{P}(\mathbf{x}) =   \alpha \mathfrak{g}_{\x, d_2 + 1} (  \underline{\mathbf{y}},   \mathbf{y}_{d_2 + 1})
- \alpha \sum_{ 1 \leq  \mathbf{j} \leq n}  \mathcal{H}_{\mathbf{j}}(\underline{\mathbf{y}})  x_{j_1} \cdots x_{j_{d_1}}
  - \alpha \mathfrak{g}_{\mathbf{x}, d_2} (\underline{\mathbf{y}})
$$
instead of (\ref{defnF=g}).
We define
$$
\mathcal{P}_{d_1} (\underline{\x}) =
\sum_{\epsilon_1 \in \{0, 1\} } \cdots \sum_{\epsilon_{d_1} \in \{0, 1\} }
(-1)^{\epsilon_1 + \cdots + \epsilon_{d_1} }
\mathcal{P} ( \epsilon_1 \mathbf{x}_1 + \cdots + \epsilon_{d_1} \mathbf{x}_{d_1} ),
$$
where $\underline{\x} = (\x_1, \ldots, \x_{d_1})$.  Since
$\mathfrak{g}_{\mathbf{x}, d_2} (\underline{\mathbf{y}})$ and  $\mathfrak{g}_{\x, d_2 + 1} (  \underline{\mathbf{y}},   \mathbf{y}_{d_2 + 1})$  are polynomials of degree 
strictly less than $d_1$ 
in $\x$,  we may argue similarly to the deduction of (\ref{XXXXX}) to obtain
$$
\mathcal{P}_{d_1}(\underline{\x}) =
(-1)^{d_1 + 1} d_1! \alpha
\sum_{ 1 \leq  \mathbf{j} \leq n} \mathcal{H}_{\mathbf{j}}(\underline{\mathbf{y}}) x_{1, j_1} \cdots x_{d_1, j_{d_1}}.
$$
 Similarly to (\ref{beforeCS}), an  application of Lemma \ref{Diff} yields
\begin{align*}
| \mathfrak{T}_{\underline{ \mathbf{y} }} ( \y_{d_2 + 1}; {\alpha}) |^{2^{d_1 - 1} }
&\leq  | \mathcal{X} |^{2^{d_1 - 1} - d_1} \sum_{\mathbf{x}_1 \in \mathcal{X}} \cdots
\sum_{\mathbf{x}_{d_1 - 1} \in \mathcal{X}}
\left| \sum_{\mathbf{x}_{d_1 } \in \mathcal{X}}  \psi ( \mathcal{P}_{d_1}( \underline{\x}) ) \right|
\\
&=
| \mathcal{X} |^{2^{d_1 - 1} - d_1} \sum_{\mathbf{x}_1 \in \mathcal{X}} \cdots
\sum_{\mathbf{x}_{d_1 - 1} \in \mathcal{X}} \sum_{\mathbf{x}_{d_1 } \in \mathcal{X}}  \psi ( \mathcal{P}_{d_1}( \underline{\x} ) ),
\end{align*}
where the final equality follows from 
(\ref{eq:laurel}), since  $\mathcal{P}_{d_1}( \underline{\x} )$ is linear in $\x_{d_1}$.
By substituting this estimate 
into (\ref{Hod1}) and recalling that $d = d_1 + d_2$, we obtain  the expression
\begin{align*}
|T (\alpha)|^{2^{ d - 1} }
&\leq
|\mathcal{X}|^{  2^{d - 1} - d_1  }
|\mathcal{Y}|^{  2^{d - 1} - d_2 - 1}
\sum_{\mathbf{y}_1 \in \mathcal{Y}} \cdots \sum_{\mathbf{y}_{d_2 + 1} \in \mathcal{Y}  }  \sum_{\mathbf{x}_1 \in \mathcal{X}} \cdots \sum_{\mathbf{x}_{d_1} \in \mathcal{X} }
\psi (  \mathcal{P}_{d_1}( \underline{\x} )  )
\\
&=
|\mathcal{X}|^{  2^{d - 1} - d_1  }
|\mathcal{Y}|^{  2^{d - 1} - d_2 }
\sum_{\mathbf{y}_1 \in \mathcal{Y}} \cdots \sum_{\mathbf{y}_{d_2 } \in \mathcal{Y}  }  \sum_{\mathbf{x}_1 \in \mathcal{X}} \cdots \sum_{\mathbf{x}_{d_1} \in \mathcal{X} }
\psi (  \mathcal{P}_{d_1}( \underline{\x} )  ),
\end{align*}
since  $\mathcal{P}_{d_1}(\underline{\x})$ is independent of $\y_{d_2+1}$.
Moreover, 
we can replace the phase function $\mathcal{P}_{d_1}( \underline{\x} )$ by
$$
 \frac{ (-1)^{d+1} }{d_1! d_2!} \mathcal{P}_{d_1}( \underline{\x} ) ,
$$
on making 
a change of variables.
Since this inequality will be important for us later, we record it as the following result.

\begin{lemma}
\label{LEMMMM}
With the notation in this section, we have
$$
|T (\alpha)|^{2^{ d - 1} } \leq
q^{  2^{d - 1} P_1 n  +  2^{d - 1}  P_2 n }  E(0)^{-1}  E(\alpha),
$$
where
$$
E(\alpha) =
\sum_{  |\underline{ \mathbf{x} }| < q^{P_1} } \sum_{  |\underline{ \mathbf{y} }| < q^{P_2} }
\psi ( \alpha \Gamma_G( \underline{\x} ; \underline{\y}  ) )
$$
and
$$
\Gamma_G( \underline{\x} ; \underline{\y}  ) =
\sum_{ 1 \leq  \mathbf{j} \leq n }  \sum_{ 1 \leq  \mathbf{k} \leq n} G_{\mathbf{j}, \mathbf{k} }
 x_{1, j_1} \cdots x_{d_1, j_{d_1}}  y_{1, k_1} \cdots y_{{d_2}, k_{d_2}}.
$$
\end{lemma}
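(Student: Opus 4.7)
The plan is to observe that essentially all of the hard work for Lemma \ref{LEMMMM} has already been carried out in the Weyl differencing derivation immediately preceding the statement. At the end of that derivation one has
$$
|T (\alpha)|^{2^{d-1}} \leq |\mathcal{X}|^{2^{d-1}-d_1} |\mathcal{Y}|^{2^{d-1}-d_2} \sum_{\underline{\y} \in \mathcal{Y}^{d_2}} \sum_{\underline{\x} \in \mathcal{X}^{d_1}} \psi(\mathcal{P}_{d_1}(\underline{\x})),
$$
so only two routine tasks remain: replacing the phase $\mathcal{P}_{d_1}(\underline{\x})$ by $\alpha\, \Gamma_G(\underline{\x};\underline{\y})$, and matching the prefactor with the quantity $q^{2^{d-1} P_1 n + 2^{d-1} P_2 n} E(0)^{-1}$ that appears in the statement.

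For the first task, I would substitute the formula for $\mathcal{H}_{\mathbf{j}}(\underline{\y})$ into the earlier identity for $\mathcal{P}_{d_1}(\underline{\x})$, obtaining
$$
\mathcal{P}_{d_1}(\underline{\x}) = (-1)^{d+1} d_1! \, d_2! \, \alpha \, \Gamma_G(\underline{\x}; \underline{\y}).
$$
I would then apply the linear change of variables $\x_1 \mapsto c^{-1} \x_1$ with $c = (-1)^{d+1} d_1! d_2!$. The hypothesis $p > d \geq d_1 + d_2$ guarantees that $c \in \FF_q^\times$, and the box $\mathcal{X}$ is visibly stable under multiplication by $\FF_q^\times$ since elements of $\FF_q^\times$ have absolute value $1$. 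Multilinearity of $\Gamma_G$ in $(\x_1, \ldots, \x_{d_1})$ then converts the phase into $\alpha\, \Gamma_G(\underline{\x};\underline{\y})$ while preserving the range of summation, and the resulting double sum is precisely $E(\alpha)$.

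For the second task, I would use $|\mathcal{X}| = q^{P_1 n}$ and $|\mathcal{Y}| = q^{P_2 n}$ together with the trivial evaluation $E(0) = |\mathcal{X}|^{d_1} |\mathcal{Y}|^{d_2} = q^{d_1 P_1 n + d_2 P_2 n}$ to rewrite
$$
|\mathcal{X}|^{2^{d-1}-d_1} |\mathcal{Y}|^{2^{d-1}-d_2} = q^{2^{d-1}(P_1+P_2)n} \cdot E(0)^{-1}.
$$
Combining this with the previous step gives exactly the claimed bound.

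The only subtle point, and what I would flag as the main issue, is the justification of the linear change of variables that removes the constant $(-1)^{d+1} d_1! d_2!$ from the phase. This step rests crucially on the characteristic assumption $p > d$ so that $d_1! d_2!$ is a unit in $\FF_q$; without it, the reduction of the Weyl-differenced sum to the clean bihomogeneous form $\Gamma_G$ would fail, echoing the small-characteristic obstruction already encountered in Proposition \ref{p:small}.
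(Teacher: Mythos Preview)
Your proposal is correct and follows exactly the paper's own argument: the lemma is simply a repackaging of the final displayed inequality in the Weyl differencing derivation, after the change of variables removing the constant $(-1)^{d+1}d_1!d_2!$ and the identification $|\mathcal{X}|^{2^{d-1}-d_1}|\mathcal{Y}|^{2^{d-1}-d_2}=q^{2^{d-1}(P_1+P_2)n}E(0)^{-1}$. If anything, you are more explicit than the paper, which merely says ``on making a change of variables'' without spelling out that one rescales a single $\x_i$ by the unit $c^{-1}\in\FF_q^\times$.
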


Let $\underline{\mathbf{y}}' = (\mathbf{y}_1, \ldots, \mathbf{y}_{d_2 - 1})$.
Let $\mathbf{e}_{i}$ be the $i$-th unit vector in $\KK^n_\infty$ for each $1 \leq i \leq n$.
Given $Q_2 \in \ZZ_{\leq 0}$ we let
$$
N_2^{(t)}(Q_2; \alpha)
=
\# \left\{
\begin{array}{l}
 \underline{\x} \in  \FF_q[u]^{d_1 n}\\
  \underline{\mathbf{y}}' \in\FF_q[u]^{(d_2-1)n} 
  \end{array}
  :
\begin{array}{l}
 |\x_1|, \ldots, |\x_{d_1}| < q^{P_1}   \\
 |\y_1|,\ldots, |\y_t| < q^{ Q_2 +  P_2}   \\
  |\y_{t + 1}|,\ldots, |\y_{d_2-1}|  <  q^{ P_2} \\
 \| \alpha \Gamma_G(  \underline{\x} ; \underline{\mathbf{y}}',  \mathbf{e}_{i} ) \| < q^{  t Q_2- P_2} \\
 \textnormal{for all $1 \leq i \leq n$}
\end{array}
\right\},
$$
for each $0 \leq t \leq d_2-1$. Note that $N_2^{(0)}(Q_2; \alpha)$ is independent of $Q_2$.
Then it follows from \eqref{eq:laurel} that
\begin{equation}\begin{split}
\label{exp bound with count 1}
E(\alpha)
&=  \sum_{\mathbf{x}_{1} \in \mathcal{X}} \cdots  \sum_{\mathbf{x}_{d_1} \in \mathcal{X}}  \sum_{\mathbf{y}_{1} \in \mathcal{Y}} \cdots
\sum_{\mathbf{y}_{d_2 - 1} \in \mathcal{Y}} \prod_{i = 1}^n
\sum_{ |y_{d_2, i}| < q^{P_2}  }
\psi  (  \alpha \Gamma_G ( \underline{\x};  \underline{\mathbf{y}}', \mathbf{e}_{i} ) y_{d_2, i}  )
 \\
&=  |\mathcal{Y}| N_2^{(0)}(Q_2; \alpha).
\end{split}\end{equation}

We proceed by recording a version of Davenport's shrinking lemma over $\FF_q[u]$. The following is proved in \cite[Lemma 6.4]{BS1}, but phrased here in a manner that best fits our needs.

\begin{lemma}
\label{shrink2}
Let $\mathfrak{L}_1, \dots, \mathfrak{L}_N \in \KK_\infty[t_1, \dots, t_N]$ be symmetric linear forms given by
$$\mathfrak{L}_i(\t) = \gamma_{i,1} t_1 + \dots + \gamma_{i,N} t_N,$$
for $1 \leq i \leq N$; i.e.\ such that $\gamma_{i,j} = \gamma_{j,i}$ for $1 \leq i, j \leq N$.
For  $A \in \QQ_{\geq 0}$ and $Z \in \QQ_{\leq 0}$, let
$$
\mathcal{N}_{A}(Z)
= \#
\{ \t \in  \FF_q[u]^N: |t_i|< q^{A +  Z  }, \| \mathfrak{L}_i(\t) \| < q^{- A +  Z }  \text{ for all $1 \leq i \leq N$} \}.
$$
Then 
we have
$$
\frac{ \mathcal{N}_{A}(Z_2)}{ \mathcal{N}_{A} ( Z_1 )} \leq  q^{ N (  Z_2  -  Z_1 )  }
$$
for any choice of $Z_1, Z_2 \in \QQ_{\leq 0}$ satisfying 
$A - Z_2 \in \NN$, 
$Z_2 - Z_1 \in \mathbb{Z}_{\geq 0}$ and 
$
A \pm Z_i \in \mathbb{Z}$
for $i\in \{1,2\}$.
\end{lemma}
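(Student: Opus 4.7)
The plan is to adapt Davenport's classical shrinking lemma to the function-field setting, following the argument in \cite[Lemma~6.4]{BS1}. Thanks to the ultrametric nature of $|\cdot|$ on $\KK_\infty$ and the $\KK_\infty$-linearity of the $\mathfrak{L}_i$, both inequalities defining $\mathcal{N}_A(Z)$ are $\FF_q$-linear in $\t$. Hence
\[
S(Z) := \set{\t \in \FF_q[u]^N : |t_i|<q^{A+Z},\ \|\mathfrak{L}_i(\t)\|<q^{-A+Z}\text{ for all }1\leq i\leq N}
\]
is an $\FF_q$-vector subspace of $\FF_q[u]^N$, and $S(Z_1)\subseteq S(Z_2)$ whenever $Z_1\leq Z_2$. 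Since $\mathcal{N}_A(Z_j) = q^{\dim_{\FF_q} S(Z_j)}$, the claim reduces to the dimension inequality
\[
\dim_{\FF_q} S(Z_2) - \dim_{\FF_q} S(Z_1) \leq N(Z_2 - Z_1).
\]

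To prove this, I would express $S(Z)$ as the kernel of an explicit $\FF_q$-linear map $\Phi_Z:(\FF_q[u]_{<A+Z})^N \to \FF_q^{N(A-Z)}$ sending $\t$ to the Laurent coefficients of $(\mathfrak{L}_1(\t),\ldots,\mathfrak{L}_N(\t))$ at the positions $u^{-A+Z},\ldots,u^{-1}$. The key structural input is the symmetric bilinear form $\langle \s,\t\rangle := \sum_i s_i \mathfrak{L}_i(\t) = \sum_{i,j}\gamma_{ij}s_i t_j$, which is genuinely symmetric thanks to the hypothesis $\gamma_{ij}=\gamma_{ji}$. Taking residues of this pairing at the appropriate powers of $u$ identifies the $N(Z_2-Z_1)$ ``extra'' top-degree coefficients of $\t$ allowed in $S(Z_2)$ beyond $S(Z_1)$ (the coefficients of $u^k$ for $A+Z_1\leq k<A+Z_2$) with the $N(Z_2-Z_1)$ ``extra'' middle Laurent coefficients of $\mathfrak{L}_i(\t)$ allowed in $S(Z_2)$ beyond $S(Z_1)$ (the coefficients of $u^\ell$ for $-A+Z_1\leq \ell<-A+Z_2$). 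Self-adjointness of the pairing then places these two $N(Z_2-Z_1)$-dimensional $\FF_q$-spaces in perfect duality, yielding an $\FF_q$-linear injection $S(Z_2)/S(Z_1) \hookrightarrow \FF_q^{N(Z_2-Z_1)}$ and hence the desired bound. The integrality hypotheses $A\pm Z_i\in\ZZ$ and $Z_2-Z_1\in\ZZ_{\geq 0}$ are exactly what is needed for all boundary degrees to align with the $\FF_q$-grading on $\FF_q[u]$, ensuring that no boundary losses arise.

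The main obstacle is the self-adjointness step above. Without using the symmetry $\gamma_{ij}=\gamma_{ji}$, the naive chain of inclusions (first loosen the size constraint, then the fractional-part constraint) would yield only $[S(Z_2):S(Z_1)]\leq q^{2N(Z_2-Z_1)}$, a factor of $q^{N(Z_2-Z_1)}$ too large. Symmetry is precisely what absorbs this redundancy by identifying the ``extra size'' and ``extra fractional-part'' freedoms via the residue pairing. This exactly mirrors Davenport's classical archimedean argument, and its formal implementation in the $\FF_q[u]$-setting is worked out in \cite[Lemma~6.4]{BS1}, from which the conclusion may be quoted verbatim.
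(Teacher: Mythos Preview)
Your proposal is correct and takes essentially the same approach as the paper: both defer the proof to \cite[Lemma~6.4]{BS1}. The paper simply cites this reference without elaboration, whereas you additionally sketch the underlying mechanism (the $\FF_q$-linearity of $S(Z)$ and the role of the symmetric pairing $\langle\s,\t\rangle=\sum_{i,j}\gamma_{ij}s_it_j$), which is a faithful outline of how the cited lemma is proved.
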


From this point onwards, we specialise our choice of $G(\x; \y)$ to satisfy the  following assumption.
\begin{hyp}[Symmetric linear forms hypothesis]\label{HYP}
There exist $C_0 \in \FF_q^*$ and a non-singular degree $d$ form $f \in \FF_q [x_1, \dots, x_n]$  such that
$$
G(\x ; \y) = C_0 \Gamma_f (\x, \dots, \x, \y, \dots, \y ),
$$
in the notation of \eqref{unique}, 
where $\x$ occupies the first $d_1$ slots and $\y$ the remaining $d_2$ slots in the expression on the right hand side.
\end{hyp}

Under this hypothesis, it follows from \eqref{eq:cherry}  and  \eqref{unique} that
\begin{align*}
G(\x ; \y) &=C_0  \sum_{i_1, \dots, i_d = 1}^n c_{i_1, \dots, i_d} x_{i_1} \dots x_{i_{ d_1 }} y_{i_{ d_1 + 1 }} \dots  y_{i_{ d }}
\\
&= C_0 \sum_{j_1, \dots, j_{d_1} = 1}^n  \sum_{k_1, \dots, k_{d_2} = 1}^n  c_{\mathbf{j}, \k} x_{j_1} \dots x_{j_{ d_1 }} y_{k_{ 1 }} \dots  y_{k_{ d_2 }}.
\end{align*}
Then
$$
\Gamma_G( \underline{\x}; \underline{\y})
=
C_0 \sum_{1 \leq \mathbf{j} \leq n}  \sum_{1 \leq \k \leq n}  c_{\mathbf{j}, \k} x_{1, j_1} \dots x_{d_1, j_{ d_1 }}
y_{1, k_{ 1 }} \dots  y_{d_2, k_{ d_2 }}.
$$
In particular, for any given $1 \leq t < d_2$ the system of linear forms
$$
\mathfrak{L}_i (\y) = \sum_{j=1}^n \Gamma_G( \underline{\x}; \y_{1}, \ldots, \y_{t-1}, \ee_j, \y_{t+1}, \ldots, \y_{d_2 - 1}, \ee_i ) y_j
$$
 is symmetric,
 for each $1 \leq i \leq n$.
 Similarly, given any $1 \leq t \leq d_1$ the system of linear forms
$$
\mathfrak{L}_i (\x) = \sum_{j=1}^n \Gamma_G(\x_{1}, \ldots, \x_{t-1}, \ee_j, \x_{t+1}, \ldots, \x_{d_1} ; \underline{\mathbf{y}}', \ee_i ) x_j
$$
 is symmetric, for each $1 \leq i \leq n$.

For  $\mathbf{Q}=(Q_1, Q_2) \in \ZZ_{\leq 0}^2$ we define
$$
N_1^{(t)}(\mathbf Q; \alpha)
=
\# \left\{
\begin{array}{l}
\underline{\x}\in \FF_q[u]^{d_1 n}\\
\underline{\mathbf{y}}'
 \in \FF_q[u]^{(d_2-1)n}
\end{array}
 :
\begin{array}{l}
 |\x_1|,\ldots, |\x_t| < q^{  Q_1 + P_1 } \\
 |\x_{t + 1}|,\ldots, |\x_{d_1} | < q^{P_1} \\
 |\y_1|, \ldots, |\y_{d_2-1}| < q^{Q_2 + P_2 }  \\
 \| \alpha \Gamma_G(  \underline{\x} ; \underline{\mathbf{y}}',  \mathbf{e}_{i} ) \| < q^{ t Q_1 + (d_2 - 1) Q_2 -  P_2 } \\
 \textnormal{for all $1 \leq i \leq n$}
\end{array}
\right\},
$$
for each $0 \leq t \leq d_1$. Note that $N_1^{(0)}(\mathbf Q; \alpha)$ is independent of $Q_1$.

\begin{lemma}\label{ineq 1-40}
Let 
$\mathbf{Q}=(Q_1, Q_2) \in \ZZ_{\leq 0}^2$. Then
$$N_2^{(0)}(Q_2; \alpha) \leq
q^{- d_1 Q_1 n} q^{-  (d_2-1) Q_2  n }  N_1^{(d_1)}(\bf Q ; \alpha).
$$
\end{lemma}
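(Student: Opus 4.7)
The plan is to interpolate between $N_2^{(0)}(Q_2;\alpha)$ and $N_1^{(d_1)}(\mathbf{Q};\alpha)$ through the chain
$$
N_2^{(0)}\to N_2^{(1)}\to\cdots\to N_2^{(d_2-1)}=N_1^{(0)}\to N_1^{(1)}\to\cdots\to N_1^{(d_1)},
$$
losing a factor of $q^{-nQ_2}$ at each step of the first segment and $q^{-nQ_1}$ at each step of the second. The matching $N_2^{(d_2-1)}(Q_2;\alpha)=N_1^{(0)}(\mathbf{Q};\alpha)$ is immediate from the definitions: both count tuples satisfying $|\x_j|<q^{P_1}$ for $1\le j\le d_1$, $|\y_j|<q^{P_2+Q_2}$ for $1\le j\le d_2-1$, and $\|\alpha\Gamma_G(\underline{\x};\underline{\mathbf{y}}',\ee_i)\|<q^{(d_2-1)Q_2-P_2}$ for all $1\le i\le n$.

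To pass from $N_2^{(t-1)}$ to $N_2^{(t)}$, freeze the variables $\underline{\x}$ and $\y_1,\ldots,\y_{t-1},\y_{t+1},\ldots,\y_{d_2-1}$ and interpret the $n$ conditions on $\y_t$ as constraints $\|\mathfrak{L}_i(\y_t)\|<q^{\cdot}$, where
$$
\mathfrak{L}_i(\y_t)=\alpha\Gamma_G(\underline{\x};\y_1,\ldots,\y_{t-1},\y_t,\y_{t+1},\ldots,\y_{d_2-1},\ee_i)=\sum_{j=1}^{n}\gamma_{i,j}\,y_{t,j}
$$
is linear in $\y_t$. The symmetry $\gamma_{i,j}=\gamma_{j,i}$ follows from Hypothesis~\ref{HYP} and the fact that the coefficients $c_{\mathbf{j},\mathbf{k}}$ of $f$ are symmetric in $\mathbf{k}$, so swapping the $t$-th slot (carrying $\ee_j$) and the $d_2$-th slot (carrying $\ee_i$) leaves the expression unchanged. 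I then apply Lemma~\ref{shrink2} with $N=n$, $A=P_2-(t-1)Q_2/2$, $Z_1=(t+1)Q_2/2$ and $Z_2=(t-1)Q_2/2$. A direct computation gives $A\pm Z_i\in\mathbb{Z}$, $A-Z_2=P_2-(t-1)Q_2\ge P_2\ge 1$, and $Z_2-Z_1=-Q_2\ge 0$, while the pairs $(q^{A+Z_1},q^{-A+Z_1})=(q^{P_2+Q_2},q^{tQ_2-P_2})$ and $(q^{A+Z_2},q^{-A+Z_2})=(q^{P_2},q^{(t-1)Q_2-P_2})$ reproduce exactly the $\y_t$-conditions inside $N_2^{(t)}$ and $N_2^{(t-1)}$ respectively. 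The lemma yields $\mathcal{N}_A(Z_2)\le q^{-nQ_2}\mathcal{N}_A(Z_1)$, and summing over the frozen variables gives
$$
N_2^{(t-1)}(Q_2;\alpha)\le q^{-nQ_2}\,N_2^{(t)}(Q_2;\alpha).
$$

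An entirely analogous argument, with $\x_t$ in place of $\y_t$ and using the symmetry of the linear forms in $\x_t$ afforded by Hypothesis~\ref{HYP}, produces $N_1^{(t-1)}(\mathbf{Q};\alpha)\le q^{-nQ_1}N_1^{(t)}(\mathbf{Q};\alpha)$ for each $1\le t\le d_1$. Telescoping both chains of inequalities and combining them through the identity $N_2^{(d_2-1)}=N_1^{(0)}$ gives
$$
N_2^{(0)}(Q_2;\alpha)\le q^{-(d_2-1)nQ_2}N_1^{(0)}(\mathbf{Q};\alpha)\le q^{-d_1nQ_1-(d_2-1)nQ_2}N_1^{(d_1)}(\mathbf{Q};\alpha),
$$
which is the desired bound. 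The main obstacle is really just bookkeeping: the symmetry verification via Hypothesis~\ref{HYP} for each of the two sets of variables, together with checking that the mixed rational choices of $A$ and $Z_i$ satisfy the integrality constraints $A\pm Z_i\in\mathbb{Z}$ and $A-Z_2\in\NN$ required by Lemma~\ref{shrink2}.
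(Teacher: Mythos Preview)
Your proof is correct and follows essentially the same approach as the paper: the same telescoping chain through $N_2^{(t)}$ and then $N_1^{(t)}$, the same freeze-one-variable strategy, and the same application of the shrinking lemma (Lemma~\ref{shrink2}) with the identical choices $A=P_2-(t-1)Q_2/2$, $Z_1=(t+1)Q_2/2$, $Z_2=(t-1)Q_2/2$ for the $\y_t$-step. The only cosmetic difference is that the paper spells out the (more elaborate) parameters $A,Z_1,Z_2$ for the $\x_t$-step, which you absorb into ``entirely analogous''; note also that for the $\y$-segment the symmetry already follows from the standing assumption that $G_{\mathbf{j},\mathbf{k}}$ is symmetric in $\mathbf{k}$, so Hypothesis~\ref{HYP} is only genuinely needed (as you say) for the $\x$-segment, where an $\x$-slot must be exchanged with the $\y$-slot carrying~$\ee_i$.
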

\begin{proof}
First we prove
$$
N_2^{(0)}(Q_2; \alpha) \leq q^{ - (d_2-1) Q_2 n  } N_2^{(d_2-1)}(Q_2; \alpha),
$$
for which it 
suffices to show that 
\begin{equation}\label{eq:tile}
N_2^{(t-1)}(Q_2; \alpha) \leq q^{- Q_2 n}  N_2^{(t)}(Q_2; \alpha),
\end{equation}
for each $1 \leq t \leq d_2-1$. This will be a straightforward consequence of Lemma \ref{shrink2}. To see this, we 
fix a choice of  $1 \leq t \leq d_2-1$ and a vector $\underline{\x}$ such that 
 $|\underline{\x}| <  q^{P_1}$. Moreover, we fix 
$\y_i \in \FF_q[u]^n$, for each $i \neq t$, satisfying
\begin{equation}
\label{cond++++++}
|\y_1|,\ldots, |\y_{t-1}| < q^{ Q_2 + P_2}
\quad
\textnormal{and}
\quad
 |\y_{t + 1}|,\ldots, |\y_{d_2-1}| < q^{P_2}.
\end{equation}
Let
\begin{align*}
\mathfrak{L}_{i} (\y)
&=
\alpha  \Gamma_G ( \underline{\x};  \y_{1}, \ldots, \y_{t-1}, \y , \y_{t + 1},  \ldots, \y_{d_2-1}, \mathbf{e}_{i})
\\
&=
\sum_{j = 1}^n \alpha  \Gamma_G ( \underline{\x};  \y_{1}, \ldots, \y_{t-1}, \mathbf{e}_j , \y_{t + 1},  \ldots, \y_{d_2-1}, \mathbf{e}_{i}) y_j,
\end{align*}
for each $1 \leq i \leq n$.
Given any $Z \in \QQ_{\leq 0}$ we write
$$
M_2(Z)  =
\# \left\{
 \y \in  \FF_q[u]^n:
\begin{array}{l}
  |\y | <  q^{ - Q_2 \frac{t - 1 }{2} + P_2 + Z  }  \\
  \|\mathfrak{L}_{i}(\y) \| <   q^{ Q_2 \frac{t - 1 }{2} - P_2 + Z  } \\
 \textnormal{for all $1 \leq i \leq n$}
\end{array}
\right\}.
$$
By applying Lemma \ref{shrink2} with
$$
Z_1 = Q_2 \frac{ t+1 }{2}, \quad Z_2 = Q_2 \frac{t - 1 }{2}
\quad \textnormal{and}
\quad A =  - Q_2 \frac{t - 1 }{2}   + P_2,
$$
we obtain
$
M_2(  Z_2  ) \leq q^{- Q_2 n} M_2(  Z_1  ).
$
Summing the inequality over all
$|\underline{\x}| <  q^{P_1}$ and $\y_{i}$ with $i \neq t$ such that (\ref{cond++++++}) holds,  we arrive at the inequality \eqref{eq:tile}.

Next we prove
$$
N_2^{(d_2-1)}(Q_2; \alpha) = N_1^{(0)}(\mathbf Q ; \alpha) \leq q^{ - d_1 Q_1 n  } N_1^{(d_1)}(
\mathbf 
Q; \alpha),
$$
which in turn follows by showing that
\begin{equation}\label{eq:tiler}
N_1^{(t-1)}(\mathbf  Q; \alpha) \leq  q^{- Q_1 n}  N_1^{(t)}(\mathbf Q; \alpha),
\end{equation}
for each $1 \leq t \leq d_1$.
Let us fix $1 \leq t \leq d_1$, $|\underline{\mathbf{y}}'| < q^{Q_2 + P_2}$, and 
and $\x_i \in \FF_q[u]^n$, for each $i \neq t$, satisfying
\begin{equation}
\label{cond+++000}
|\x_1|,\ldots, |\x_{t-1}| < q^{Q_1 + P_1}
\quad
\textnormal{and}
\quad
|\x_{t + 1}|,\ldots, |\x_{d_1}|  < q^{P_1}.
\end{equation}
Let
\begin{align*}
\mathfrak{L}_{i} (\x)
&=
\alpha  \Gamma_G (  \x_{1}, \ldots, \x_{t-1}, \x , \x_{t + 1},  \ldots, \x_{d_1}; \underline{\mathbf{y}}',   \mathbf{e}_{i})
\\
&= \sum_{j=1}^n  \alpha  \Gamma_G (  \x_{1}, \ldots, \x_{t-1}, \mathbf{e}_j , \x_{t + 1}, \ldots, \x_{d_1}; \underline{\mathbf{y}}',   \mathbf{e}_{i}) x_j
\end{align*}
for each $1 \leq i \leq n$.
Given any $Z \in \QQ_{\leq 0}$ we write
$$
M_1(Z)
=
\#
\left\{
 \x \in  \FF_q[u]^n:
\begin{array}{l}
   |\x| < q^{ - Q_1  \frac{t - 1 }{2}   - Q_2 \frac{d_2-1}{2} + \frac12 (P_1 + P_2) + Z    }    \\
   \|\mathfrak{L}_{i}(\x) \|  < q^{  Q_1  \frac{t - 1 }{2}   + Q_2 \frac{d_2-1}{2} - \frac12 (P_1 + P_2) + Z    }   \\
 \textnormal{for all $1 \leq i \leq n$}
\end{array}
\right\}.
$$
By applying Lemma \ref{shrink2} with
$$
Z_1 = Q_1  \frac{t + 1 }{2} +  Q_2 \frac{d_2-1}{2}  + \frac12 (P_1 - P_2), \quad
Z_2 = Q_1 \frac{t - 1 }{2} + Q_2 \frac{d_2-1}{2} + \frac12 (P_1 - P_2)
$$
and
$$
A = - Q_1  \frac{t - 1 }{2} -  Q_2 \frac{d_2-1}{2} + \frac12 (P_1 + P_2),
$$
we obtain
$
M_1( Z_2 )   \leq q^{- Q_1 n }  M_1( Z_1 ).
$
Summing the inequality over all
$|\underline{\mathbf{y}}'| < q^{ Q_2 + P_2 }$ and
$\x_{i}$ with $i \neq t$ such that (\ref{cond+++000}) holds,
we arrive at the inequality \eqref{eq:tiler}, which thereby completes the proof.
\end{proof}

Let us denote
$$
V^* = \left\{  (\x, \y) \in \AA^{2n}:  \frac{\partial G}{ \partial y_i } (\x; \y) = 0 \textnormal{ for all $1 \leq i \leq n$} \right\}
\quad
\textnormal{and}
\quad
\sigma_G = 2n - \dim V^*
$$
if $d_1 \geq 1$ and $d_2 \geq 2$,
$$
V^* = \left\{  \x  \in \AA^{n}:  \frac{\partial G}{ \partial y_i } (\x; \y) = 0 \textnormal{ for all $1 \leq i \leq n$} \right\}
\quad
\textnormal{and}
\quad
\sigma_G = n - \dim V^*
$$
if $d_1 \geq 1$ and $d_2 = 1$, and
$$
V^* = \left\{  \y  \in \AA^{n}:  \frac{\partial G}{ \partial y_i } (\y) = 0 \textnormal{ for all $1 \leq i \leq n$} \right\}
\quad
\textnormal{and}
\quad
\sigma_G = n - \dim V^*
$$
if $d_1 = 0$ and $d_2 \geq 2$.
We proceed by proving the following lower bound for $\sigma_G$.

\begin{lemma}
\label{lemma5.1}
Under Hypothesis \ref{HYP} we have
$
\sigma_G \geq n.
$
\end{lemma}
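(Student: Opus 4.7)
My plan is to split into the three cases appearing in the definition of $\sigma_G$. When $d_2=1$ or $d_1=0$ the lemma is handled directly: if $d_1\ge 1$ and $d_2=1$, then
\[
\nabla_{\y} G(\x;\y) = C_0 \Gamma_f(\x^{(d-1)},\cdot) = \tfrac{C_0}{d}\nabla f(\x),
\]
so $V^* = \{\x : \nabla f(\x)=\0\}=\{\0\}$ by the smoothness of the projective hypersurface $V(f)$ (using $p>d$); similarly if $d_1=0$ and $d_2\ge 2$, then $G(\y)=C_0 f(\y)$ and $V^*=\{\0\}$. In both cases $\sigma_G=n$.

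The substantive case is $d_1\ge 1$, $d_2\ge 2$, where $V^*\subset \AA^{2n}$ and I need $\dim V^*\le n$. The idea is to intersect $V^*$ with the diagonal and invoke the smoothness of $f$. Each of the $n$ defining equations of $V^*$,
\[
F_k(\x,\y) := \tfrac{1}{C_0 d_2}\,\tfrac{\partial G}{\partial y_k}(\x;\y) = \Gamma_f(\x^{(d_1)},\y^{(d_2-1)},\ee_k),
\]
is bihomogeneous of bidegree $(d_1,d_2-1)$, hence homogeneous of total degree $d-1\ge 1$ in $(\x,\y)$. Thus $V^*$ is invariant under the simultaneous scaling $(\x,\y)\mapsto(\lambda\x,\lambda\y)$, and in particular every irreducible component $W$ of $V^*$ is a cone through the origin.

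Now let $\Delta:=\{(\x,\x):\x\in\AA^n\}\subset \AA^{2n}$, which is an $n$-dimensional linear subspace cut out by the $n$ equations $y_i-x_i=0$. Restricting $F_k$ to $\Delta$ and using the total symmetry of $\Gamma_f$ gives
\[
F_k(\x,\x) = \Gamma_f(\x^{(d_1)},\x^{(d_2-1)},\ee_k) = \Gamma_f(\x^{(d-1)},\ee_k) = \tfrac{1}{d}\,\tfrac{\partial f}{\partial x_k}(\x),
\]
so $V^*\cap\Delta = \{(\x,\x):\nabla f(\x)=\0\} = \{(\0,\0)\}$ by the smoothness of $f$. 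Applying Krull's height theorem componentwise, since $\Delta$ is cut out by $n$ equations in $\AA^{2n}$, each non-empty component of $W\cap \Delta$ satisfies $\dim(W\cap\Delta)\ge \dim W - n$. Since $W$ is a cone it contains $\0$, so $W\cap\Delta\neq\emptyset$; and $W\cap\Delta\subseteq\{(\0,\0)\}$ forces $\dim(W\cap\Delta)=0$. Therefore $\dim W\le n$ for every component, and maximising yields $\dim V^*\le n$, that is $\sigma_G\ge n$.

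The main potential obstacle would be to analyse $V^*$ through the projection $(\x,\y)\mapsto\x$: the Fermat example shows that fibres can have dimension up to $n-1$ on positive-codimension loci in $\x$, so a direct fibre-dimension argument would require an unpleasant stratification of the space of forms $f_{\x}(\y)=\Gamma_f(\x^{(d_1)},\y^{(d_2)})$. The diagonal-intersection strategy sidesteps this entirely by repackaging the non-singularity of $f$ as the single identity $F_k(\x,\x)=\frac{1}{d}\partial_k f(\x)$ and letting Krull's theorem handle the dimension bookkeeping.
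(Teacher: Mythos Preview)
Your proof is correct and follows essentially the same strategy as the paper: intersect $V^*$ with the diagonal $\{\x=\y\}$, use that the restriction recovers $\nabla f$, and apply the affine dimension theorem (your Krull's height theorem formulation) to bound $\dim V^*\le n$. You are in fact slightly more careful than the paper in explicitly verifying that each component of $V^*$ is a cone through the origin so that the intersection with $\Delta$ is non-empty, and in treating the two degenerate cases $d_2=1$ and $d_1=0$ explicitly rather than declaring them similar.
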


\begin{proof}
We handle the case in which $d_1 \geq 1$ and $d_2 \geq 2$, the remaining two cases being similar.  
It then follows from Hypothesis \ref{HYP} that 
$$
V^* = \{ (\x, \y) \in  \AA^{2n}:   \Gamma_f (\x, \dots, \x, \y, \dots, \y,  \ee_{i} ) = 0  \textnormal{ for all $1 \leq i \leq n$} \}.
$$
Intersection with the diagonal 
$\{ (\x, \y) \in  \AA^{2n}:   \x = \y  \}$ leads to the set of $\x\in \AA^n$ such that $\nabla f(\x)=\0$. 
The only such vector is $\x=\0$, since $f$ is assumed to be non-singular. But then it follows
from the affine dimension theorem that
$
\dim V^* \leq  n,
$
whence $\sigma_G \geq n$.
\end{proof}

We are now ready to record our key major and minor arc dichotomy concerning the exponential sum $E(\alpha)$ that appears in Lemma \ref{LEMMMM}. 

\begin{lemma}
\label{6.2}
Let $1 \leq J \leq P_1$ be an integer. Then one of the following two alternatives holds:

\textnormal{(i)} We have
$
| E (\alpha)|
\leq
(d-1)^{n} E(0)
q^{- \sigma_G J  }$.

\textnormal{(ii)}
There exist $g, a \in \FF_q[u]$, with $g$ monic, such that $\gcd(g, a) = 1$,
$$
0 < |g| \leq q^{  (d-1) (J-1) }
$$
and
$$
| g \alpha - a | < q^{ - d_1 P_1 - d_2 P_2 + (d - 1) J }.
$$
\end{lemma}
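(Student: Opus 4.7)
The plan is to execute a Davenport--Schmidt style major/minor arc analysis adapted to function fields, converting the exponential sum $|E(\alpha)|$ into a counting problem for a Diophantine system of inequalities and then bounding that count using the Birch singular locus $V^*$ and Hypothesis~\ref{HYP}.

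First, I would combine the identity $E(\alpha) = |\mathcal{Y}| N_2^{(0)}(Q_2; \alpha)$ from \eqref{exp bound with count 1} with Lemma~\ref{ineq 1-40}, taking $\mathbf{Q} = (Q_1, Q_2)$ with $Q_1, Q_2$ calibrated to $J$, to obtain
$$|E(\alpha)| \leq q^{P_2 n - d_1 Q_1 n - (d_2-1) Q_2 n}\, N_1^{(d_1)}(\mathbf{Q}; \alpha).$$
This converts the problem into counting $(\underline{\x}, \underline{\y}')$ satisfying the size constraints of Section~\ref{sec:BHMG} together with the near-integrality conditions $\|\alpha \Gamma_G(\underline{\x}; \underline{\y}', \mathbf{e}_i)\| < q^{\ast}$ for $1 \leq i \leq n$. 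Under Hypothesis~\ref{HYP}, the associated linear forms $\mathfrak{L}_i$ are symmetric (as verified in the discussion following Hypothesis~\ref{HYP}), so that Lemma~\ref{shrink2} may be invoked once more to tighten the fractional-part constraint. A careful choice of shrinking parameter, coupled with the appropriate $\mathbf{Q}$, should bring the constraint to precisely
$$\|\alpha \Gamma_G(\underline{\x}; \underline{\y}', \mathbf{e}_i)\| < q^{-d_1 P_1 - d_2 P_2 + (d-1)J} \quad \text{for all } 1 \leq i \leq n,$$
which matches the denominator scale in conclusion (ii).

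Next, I would run a dichotomy. Suppose (ii) fails, so that no coprime pair $(g, a)$ with $g$ monic, $0 < |g| \leq q^{(d-1)(J-1)}$, satisfies $|g \alpha - a| < q^{-d_1 P_1 - d_2 P_2 + (d-1)J}$. Then a standard pigeon-hole/difference argument applied to two counted tuples (whose pairwise differences produce integer polynomials of controlled degree with small $\alpha$-times value) would force each near-integer value $\alpha \Gamma_G(\underline{\x}; \underline{\y}', \mathbf{e}_i)$ to be \emph{exactly} zero. In other words, every tuple counted by the tightened quantity must lie in the variety
$$\{(\underline{\x}, \underline{\y}') : \Gamma_G(\underline{\x}; \underline{\y}', \mathbf{e}_i) = 0 \text{ for all } 1 \leq i \leq n\}.$$
Under Hypothesis~\ref{HYP}, this is precisely the vanishing locus of the partial derivatives $\partial G/\partial y_i$ evaluated on an appropriate diagonal, whose dimension is bounded by $\dim V^*$ and whose degree is at most $(d-1)^n$ by B\'{e}zout (since each $\partial G/\partial y_i$ has total degree $d-1$).

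Finally, Lemma~\ref{dimbdd} applied to this variety yields a point count of the form $(d-1)^n q^{(\dim V^*) \cdot (\text{scale})}$; substituting the definition $\sigma_G = (\text{total dim}) - \dim V^*$ and Lemma~\ref{lemma5.1}, and back-substituting through the chain of inequalities from steps one and two, should deliver exactly the bound $(d-1)^n E(0) q^{-\sigma_G J}$ of conclusion (i). The main obstacle will be the careful bookkeeping of the $q$-exponents through each application of Lemma~\ref{shrink2}, especially because the boxes in $\mathcal{X}$ and $\mathcal{Y}$ scale independently of one another; one must choose $Q_1, Q_2$ and the extra shrinking parameter so that the shrinking losses, the box sizes, and the codimension $\sigma_G$ all conspire to produce the precise exponents stated in conclusions (i) and (ii).
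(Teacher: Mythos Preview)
Your overall strategy matches the paper's: reduce $|E(\alpha)|$ to $N_1^{(d_1)}(\mathbf{Q};\alpha)$ via \eqref{exp bound with count 1} and Lemma~\ref{ineq 1-40}, then run a dichotomy and bound the count on an algebraic variety via Lemma~\ref{dimbdd}. However, several of the intermediate steps you sketch are either superfluous or incorrect as stated.

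The extra application of Lemma~\ref{shrink2} is unnecessary: taking $Q_1 = J - P_1$ and $Q_2 = J - P_2$ in Lemma~\ref{ineq 1-40} already produces uniform boxes $|\x_s|,|\y_s| < q^J$ and the constraint $\|\alpha\Gamma_G(\underline{\x};\underline{\y}',\mathbf{e}_i)\| < q^{-d_1 P_1 - d_2 P_2 + (d-1)J}$, exactly the scale in (ii). More seriously, the pigeon-hole/difference mechanism you invoke does not work as written, since $\Gamma_G$ is multilinear rather than linear in the full tuple, so differences of tuples do not control $\Gamma_G$-values; and in any case no such device is needed. The dichotomy is direct: if \emph{some} tuple counted by $N_1^{(d_1)}(\mathbf{Q};\alpha)$ has $\Gamma_G(\underline{\x};\underline{\y}',\mathbf{e}_{i_0}) \neq 0$, then that nonzero polynomial already satisfies $|\Gamma_G| \leq q^{(d-1)(J-1)}$ and, after clearing any common factor with its integer part, furnishes the pair $(g,a)$ in (ii). Otherwise every counted tuple lies in $\mathcal{Z} = \{(\underline{\x},\underline{\y}') \in \AA^{(d-1)n} : \Gamma_G(\underline{\x};\underline{\y}',\mathbf{e}_i)=0 \text{ for all } i\}$. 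Finally, you conflate $\mathcal{Z}$ with $V^*$: Lemma~\ref{dimbdd} bounds the count by $(d-1)^n q^{J\dim\mathcal{Z}}$, and one still needs the affine dimension theorem, intersecting $\mathcal{Z}$ with the diagonal $\{\x_1=\cdots=\x_{d_1},\ \y_1=\cdots=\y_{d_2-1}\}$, to obtain $\dim\mathcal{Z} \leq \dim V^* + (d_1-1)n + (d_2-2)n$ (in the case $d_1\geq 1$, $d_2\geq 2$) and thereby convert the exponent into $-\sigma_G J$.
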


\begin{proof}
We define the affine variety
$$
\mathcal{Z} = \{   (\underline{\mathbf{x}}, \underline{\mathbf{y}}' ) \in \mathbb{A}^{(d - 1) n }  :
\Gamma_G(  \underline{\mathbf{x}};  \underline{\mathbf{y}}', \mathbf{e}_{i} ) = 0   \textnormal{ for all $1 \leq i \leq n$}  \}
$$
and let
$$
\mathcal{M}(\mathcal{Z})
=
\notag
\left\{
(\underline{\mathbf{x}}, \underline{\mathbf{y}}') \in \mathcal{Z} \cap \mathbb{F}_q[u]^{(d - 1) n}  :
\begin{array}{l}
|\x_{1}|, \ldots, |\x_{d_1}| < q^{ Q_1 + P_1 }  \\
|\y_{1}|, \ldots, |\y_{d_2-1}|  < q^{ Q_2 + P_2 }
\end{array}{}
\right\},
$$
for integers $Q_1,Q_2\leq 0$.
In this proof, we choose $Q_1 = J - P_1$ and $Q_2 = J - P_2$.

Suppose every point counted by
$N_1^{(d_1)}(\mathbf Q; \alpha)$ is contained in $\mathcal{M}(\mathcal{Z})$. Then 
\begin{equation}
\label{ineq 4 10}
N_1^{(d_1)}(\mathbf Q; \alpha)
\leq
\# \mathcal{M}(\mathcal{Z})
\leq (d-1)^n q^{J \dim \mathcal{Z}} ,
\end{equation}
by  Lemma \ref{dimbdd}. It therefore follows from (\ref{exp bound with count 1}),
(\ref{ineq 4 10}) 
and Lemma \ref{ineq 1-40} that
\begin{equation}\begin{split}
\label{6.23}
| E (\alpha)|
&\leq  (d-1)^n q^{ P_2 n}
q^{- d_1 (J - P_1)  n} q^{- (d_2 - 1) (J- P_2) n } q^{J \dim \mathcal{Z}}
\\
&=
(d-1)^n  q^{d_1  P_1  n +  d_2  P_2 n } q^{- d_1 J   n} q^{- (d_2 - 1) J  n } q^{J \dim \mathcal{Z}}
\\
&=
(d-1)^n  E(0)  q^{- d_1 J   n} q^{- (d_2 - 1) J  n } q^{J \dim \mathcal{Z}}.
\end{split}\end{equation}
Let us denote
$$
\mathcal{D} = \{  (\underline{\mathbf{x}}, \underline{\mathbf{y}}') \in \mathbb{A}^{(d - 1 )n }:  \mathbf{x}_1 = \cdots = \mathbf{x}_{d_1} \text{ and }
\mathbf{y}_1 = \cdots = \mathbf{y}_{d_2-1} \}
$$
if $d_1 \geq 1$ and $d_2 \geq 1$, and
$$
\mathcal{D} = \{  \underline{\mathbf{y}}' \in \mathbb{A}^{(d - 1 )n }:  \mathbf{y}_1 = \cdots = \mathbf{y}_{d_2 - 1} \}
$$
if $d_1 = 0$ and $d_2 \geq 2$. Then it follows from the affine dimension theorem that
\begin{align*}
\dim V^{*} &= \dim ( \mathcal{Z} \cap \mathcal{D} )
\\
&\geq  \dim \mathcal{Z}
+ \dim \mathcal{D} - (d - 1) n
\\
&=
\begin{cases}
\dim \mathcal{Z}  -  (d_1 - 1) n -   (d_2 - 2) n  & \mbox{if $d_1 \geq 1$ and $d_2 \geq 2$,}  \\
\dim \mathcal{Z}  -  (d_1 - 1) n     & \mbox{if $d_1 \geq 1$ and $d_2 = 1$,}  \\
\dim \mathcal{Z}  -  (d_2 - 2) n    & \mbox{if $d_1 = 0$ and $d_2 \geq 2$.}
\end{cases}
\end{align*}
With this inequality and the definition of $\sigma_G$, (\ref{6.23}) becomes
$$
| E (\alpha)|
\leq
(d-1)^{n} E(0) q^{- \sigma_G J },
$$
which is precisely the estimate in alternative (i).

On the other hand, suppose there exists $(\underline{\mathbf{x}}, \underline{\mathbf{y}}')$ counted by
$N_1^{(d_1)}(\mathbf Q; \alpha)$ which is not contained in $\mathcal{M}(\mathcal{Z})$.
Then there exists $1 \leq i_0 \leq n$ such that
$
 \Gamma_G(\underline{\mathbf{x}};  \underline{\mathbf{y}}', \mathbf{e}_{i_0})$ is a non-zero element of $\FF_q[u]$.
Let us write
$$
\alpha \Gamma_G(\underline{\mathbf{x}};  \underline{\mathbf{y}}', \mathbf{e}_{i_0}) = a + \xi,
$$
where $a \in \FF_q[u]$ and
$$
|\xi | <  q^{  d_1 Q_1 + (d_2 - 1) Q_2 - P_2 } = q^{ - d_1 P_1 - d_2 P_2 + (d_1 + d_2 - 1) J }  = q^{ - d_1 P_1 - d_2 P_2 + (d - 1) J }.
$$
Since
$$
1 \leq |\Gamma_G(\underline{\mathbf{x}};  \underline{\mathbf{y}}', \mathbf{e}_{i_0})| \leq q^{ (d_1 + d_2 - 1) (J-1) }  = q^{ (d - 1) (J - 1) },
$$
alternative (ii) follows on taking  taking out a common factor from $\Gamma_G(\underline{\mathbf{x}};  \underline{\mathbf{y}}', \mathbf{e}_{i_0})$ and $a$ if necessary. \end{proof}

\subsection{Mean value estimate}
Let us define
$$
\mathfrak{M}(J)=
\hspace{-0.2cm}
\bigcup_{\substack{g\in \FF_q[u] \text{ monic}\\
0<|g|\leq q^{(d-1)(J-1)}
}}
\bigcup_{\substack{a \in \FF_q[u]\\
|a|<|g|\\ \gcd(a,g)=1}} \left\{\alpha \in \TT: \left|\alpha - \frac{ a}{g}\right|<\frac{q^{-d_1
P_1 - d_2 P_2 + (d-1) J }}{|g|}\right\},
$$
which  is precisely the set that appears in alternative (ii) of Lemma \ref{6.2}.
For $\varrho > 0$ and $J \in \NN$ we let
$$
I_J (\varrho) =
  \int_{ \mathfrak{M}(J+1) \setminus  \mathfrak{M}(J)} |E(\alpha)|^\varrho   \d \alpha.
$$
We note that $I_J(\varrho) = 0$ if
\begin{equation}
\label{Diri}
J \geq \frac{ d_1 P_1 + d_2 P_2  + d - 1 }{ 2( d -  1)},
\end{equation}
since it follows from Dirichlet's approximation theorem 
\cite[Lemma 3]{kubota}  that $\mathfrak{M}(J) = \TT$
in this case.
\begin{lemma}\label{lem:IJ-large}
Let $J \in \NN$.
Suppose
$$
\sigma_G  \varrho > 2 (d-1)
$$
and
$$
P_1 \geq \frac{ d_1 P_1 + d_2 P_2  + d - 1 }{ 2( d -  1)}.
$$
Then
$$
I_J  (\varrho) \leq (d-1)^{ n \varrho} E(0)^{\varrho} q^{  - d_1 P_1 - d_2 P_2 + d-1 - \delta J},
$$
where $\delta = \sigma_G \varrho - 2 (d-1)$.
\end{lemma}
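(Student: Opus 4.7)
The plan is to combine the pointwise major/minor arc dichotomy of Lemma~\ref{6.2} with a straightforward union-bound estimate for the measure of $\mathfrak{M}(J+1)$. The essential observation is that on the sliver $\mathfrak{M}(J+1) \setminus \mathfrak{M}(J)$ one is by definition \emph{not} in the set described by alternative (ii) of Lemma~\ref{6.2} at level $J$, so alternative (i) is forced there, yielding a strong pointwise bound on the integrand.

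First I would verify that Lemma~\ref{6.2} is applicable in the relevant range of $J$. By \eqref{Diri} we may assume $J < (d_1 P_1 + d_2 P_2 + d-1)/(2(d-1))$, since otherwise $I_J(\varrho) = 0$ and there is nothing to prove. Combined with the hypothesis $P_1 \geq (d_1 P_1 + d_2 P_2 + d-1)/(2(d-1))$, this gives $J < P_1$, so the range $1 \leq J \leq P_1$ required in Lemma~\ref{6.2} is satisfied. Alternative (i) then yields, uniformly for $\alpha \in \mathfrak{M}(J+1) \setminus \mathfrak{M}(J)$,
$$
|E(\alpha)|^{\varrho} \leq (d-1)^{n\varrho}\, E(0)^{\varrho}\, q^{-\sigma_G \varrho J},
$$
after which integration gives $I_J(\varrho) \leq (d-1)^{n\varrho} E(0)^{\varrho} q^{-\sigma_G \varrho J} \meas(\mathfrak{M}(J+1))$.

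Next I would estimate $\meas(\mathfrak{M}(J+1))$ by a routine union bound over the disks in its definition. Each disk has measure $q^{-d_1 P_1 - d_2 P_2 + (d-1)(J+1)}/|g|$; for each monic $g$ with $0 < |g| \leq q^{(d-1)J}$ there are $\varphi(g) \leq |g|$ admissible numerators $a$, contributing at most $q^{-d_1 P_1 - d_2 P_2 + (d-1)(J+1)}$ per $g$; and summing the geometric series $\sum_{0 \leq k \leq (d-1)J} q^k$ over the possible degrees of $g$ produces an extra factor essentially of order $q^{(d-1)J}$. This yields a bound of the shape
$$
\meas\bigl(\mathfrak{M}(J+1)\bigr) \leq q^{-d_1 P_1 - d_2 P_2 + (d-1) + 2(d-1)J}.
$$
Multiplying the two estimates together and recalling $\delta = \sigma_G \varrho - 2(d-1)$ yields the claimed bound.

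No step is really hard: the non-archimedean geometry makes the disk count clean (disks for distinct Farey fractions being nested or disjoint), and the hypothesis $\sigma_G \varrho > 2(d-1)$ is only used at the final stage to guarantee $\delta > 0$. The mildest subtlety is book-keeping of exponents in the geometric sum $\sum_{k \leq (d-1)J} q^k$ to ensure the promised exponent $d-1$ (rather than $d$) appears; this is where one uses that the extreme summand $q^{(d-1)J}$ dominates the series up to an absolute constant that can be absorbed.
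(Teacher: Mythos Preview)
Your proposal is correct and follows essentially the same approach as the paper: use the hypothesis on $P_1$ together with \eqref{Diri} to reduce to $J\leq P_1$, apply alternative (i) of Lemma~\ref{6.2} pointwise on $\mathfrak{M}(J+1)\setminus\mathfrak{M}(J)$, and then bound $\meas(\mathfrak{M}(J+1))$ by the union bound $q^{-d_1P_1-d_2P_2+(d-1)+2(d-1)J}$. Your caveat about the geometric-sum constant is apt; the paper simply asserts this measure bound as ``clear'' without further comment.
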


\begin{proof}
First suppose that $J\geq P_1$. Then under our assumption on $P_1$ it follows 
from (\ref{Diri}) that $I_J(\varrho) = 0$ and we are done. Suppose now that  $ 1 \leq J < P_1$.
Given $\alpha \in \mathfrak{M}(J+1) \setminus  \mathfrak{M}(J)$, it follows from Lemma \ref{6.2} that
$$
|E(\alpha)|^{\varrho} \leq  (d-1)^{ n \varrho}  E(0)^{\varrho} q^{ - \sigma_G  \varrho J}.
$$
It is clear that
$$
 \int_{ \mathfrak{M}(J+1) \setminus  \mathfrak{M}(J)}    \d \alpha \leq
  \int_{ \mathfrak{M}(J+1)}   \d \alpha \leq q^{ - d_1 P_1 - d_2 P_2 + (d-1) (J+1) + (d-1) J}.
$$
Therefore, we obtain
\begin{align*}
I_J  (\varrho)
&\leq
q^{ - d_1 P_1 - d_2 P_2 + (d-1) (J+1)   + (d-1) J}
(d-1)^{ n \varrho}  E(0)^{\varrho} q^{ - \sigma_G  \varrho J}
\\
&\leq
(d-1)^{ n \varrho}
E(0)^{\varrho} q^{   - d_1 P_1 - d_2 P_2 + d - 1}
q^{  ( 2  (d-1)  - \sigma_G \varrho) J}
\\
&=
(d-1)^{ n \varrho} E(0)^{\varrho} q^{ - d_1 P_1 - d_2 P_2 + d-1 - \delta J},
\end{align*}
where $\delta = \sigma_G \varrho - 2 (d-1)$.
\end{proof}

\begin{proposition}
\label{MVT}
Suppose
$$
\sigma_G  \varrho > 2 (d-1)
$$
and
$$
P_1 \geq \frac{ d_1 P_1 + d_2 P_2  + d - 1 }{ 2( d -  1)}.
$$
Then there exits $\delta > 0$ such that
$$
\int_{\TT} |E(\alpha)|^\varrho   \d \alpha \leq  E(0)^{\varrho} q^{ - d_1 P_1 - d_2 P_2 + d - 1 }
\left( 1 + (d-1)^{ n \varrho}  \frac{q^{- \delta} }{1 - q^{- \delta}} \right).
$$
\end{proposition}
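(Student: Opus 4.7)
The plan is to execute a standard dyadic summation using the nested family of major arcs $\mathfrak{M}(J)$ together with the bounds already established in Lemma~\ref{lem:IJ-large}. Under the standing hypothesis that $P_1 \geq \tfrac{d_1 P_1 + d_2 P_2 + d - 1}{2(d-1)}$, Dirichlet's approximation theorem \cite[Lemma~3]{kubota} guarantees that $\mathfrak{M}(J) = \TT$ for every integer $J$ exceeding this quantity, so only finitely many annuli $\mathfrak{M}(J+1)\setminus \mathfrak{M}(J)$ are non-empty. This yields the disjoint decomposition
\begin{align*}
\int_{\TT} |E(\alpha)|^{\varrho} \d \alpha
= \int_{\mathfrak{M}(1)} |E(\alpha)|^{\varrho} \d \alpha + \sum_{J \geq 1} I_J(\varrho),
\end{align*}
which reduces the task to estimating each of the two contributions.

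For the innermost arc, I would unpack the definition of $\mathfrak{M}(J)$ at $J = 1$. The constraint $0 < |g| \leq q^{(d-1)\cdot 0} = 1$ forces $g = 1$, and then $|a| < 1$ with $\gcd(a,g)=1$ forces $a = 0$, so
\begin{align*}
\mathfrak{M}(1) = \bigl\{ \alpha \in \TT : |\alpha| < q^{-d_1 P_1 - d_2 P_2 + d - 1} \bigr\},
\end{align*}
which has measure $q^{-d_1 P_1 - d_2 P_2 + d - 1}$. Combined with the trivial bound $|E(\alpha)| \leq E(0)$ this produces the term ``$1$'' in the final expression, namely
\begin{align*}
\int_{\mathfrak{M}(1)} |E(\alpha)|^{\varrho} \d \alpha \leq E(0)^{\varrho} q^{-d_1 P_1 - d_2 P_2 + d - 1}.
\end{align*}

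For the annular contributions, the assumptions on $\varrho$ and $P_1$ are exactly those required by Lemma~\ref{lem:IJ-large}, so with $\delta = \sigma_G \varrho - 2(d-1) > 0$ the individual bound is
\begin{align*}
I_J(\varrho) \leq (d-1)^{n\varrho} E(0)^{\varrho} q^{-d_1 P_1 - d_2 P_2 + d - 1} \cdot q^{-\delta J}.
\end{align*}
Summing the geometric series in $J$ (even though it terminates at a finite value, the full tail bound suffices) gives
\begin{align*}
\sum_{J \geq 1} I_J(\varrho) \leq (d-1)^{n\varrho} E(0)^{\varrho} q^{-d_1 P_1 - d_2 P_2 + d - 1} \cdot \frac{q^{-\delta}}{1 - q^{-\delta}},
\end{align*}
and adding the two estimates produces the stated inequality.

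I do not expect any genuine obstacle: the analytic heart of the argument is already contained in the Weyl differencing leading to Lemma~\ref{LEMMMM}, the dimension bound of Lemma~\ref{lemma5.1}, and the major/minor arc dichotomy of Lemma~\ref{6.2}, all of which feed into Lemma~\ref{lem:IJ-large}. The only mild care required is to identify $\mathfrak{M}(1)$ explicitly and to confirm via Dirichlet that the dyadic sum over $J$ indeed covers all of $\TT$; both are essentially bookkeeping.
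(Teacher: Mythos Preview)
Your proposal is correct and follows essentially the same route as the paper: decompose $\TT$ into $\mathfrak{M}(1)$ and the annuli $\mathfrak{M}(J+1)\setminus\mathfrak{M}(J)$, bound the former trivially by $E(0)^{\varrho}$ times the measure of $\mathfrak{M}(1)$, apply Lemma~\ref{lem:IJ-large} to each annulus, and sum the geometric series. Your explicit identification of $\mathfrak{M}(1)$ is a bit more detailed than the paper's one-line ``trivial bound'', but the argument is otherwise identical.
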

\begin{proof}
First we have the trivial bound
$$
\int_{\mathfrak{M}(1)} |E(\alpha)|^\varrho   \d \alpha \leq   E(0)^{\varrho} q^{ - d_1 P_1 - d_2 P_2 + d - 1 }.
$$
Therefore, by combining this estimate with Lemma \ref{lem:IJ-large}, we obtain
\begin{align*}
\int_{\TT} |E(\alpha)|^\varrho   \d \alpha &= \int_{\mathfrak{M}(1)} |E(\alpha)|^\varrho   \d \alpha  + \sum_{J \geq 1} I_J  (\varrho)
\\
&\leq
 E(0)^{\varrho}  q^{ - d_1 P_1 - d_2 P_2 + d - 1 } \left( 1 + (d-1)^{ n \varrho}  \frac{q^{- \delta} }{1 - q^{- \delta}} \right),
\end{align*}
as required.
\end{proof}

\section{Proof of Theorem \ref{t:2}}
\label{sec:proof}
We recall that $\FF_q$ is a finite field of characteristic $p > d$. We have 
$$
N(e) = \#\left\{
 \underline{ \mathbf{t} } = (\t_0, \dots, \t_e) \in \FF_q[u]^{n (e + 1) } :
\begin{array}{l}
|\t_s|  < q^{s + 1} \text{ for all $0 \leq s \leq e$}  \\
F_j(  \underline{ \t }  ) = 0 \text{ for all $0 \leq j \leq d e$}
\end{array}
\right\}
$$
in  \eqref{big*},
where $F_j(  \underline{ \t }  )$  are defined in \eqref{eq:ash}. 
It follows from \eqref{unique} that 
$$
F_j( \underline{ \t }  ) =
\sum_{ \substack{ 0 \leq s_1, \dots, s_d \leq e \\ s_1 + \dots + s_d =  j }} \Gamma_f (\t_{s_1}, \dots, \t_{s_d} ),
$$
for each $0 \leq j \leq d e$. 
On appealing to \eqref{eq:ortho}, we may now write
$$
N(e) =
\int_{\TT^{de+1}}S(\bal)\d \bal,
$$
where
\begin{equation}
\label{SSS}
S(\bal) = \sum_{  \underline{\t} \in \mathfrak{U}  }  \psi \left( \sum_{k =  0 }^{d e }  \alpha_k F_k(  \underline{\t}   )
\right)
\end{equation}	
and
$$
\mathfrak{U} = \{  \underline{\t} \in   \FF_q[u]^{ n (e+1) }:   |\t_s| < q^{s + 1} \textnormal{ for all  $0 \leq s \leq e$} \}.
$$
Let $j\in \{0 ,\dots,de\}$. Then any such $j$ can be written  $j = (\ell - 1) d + r$ with $0 \leq \ell \leq e$ and  $1 \leq r \leq d$.

Suppose first  that $r<d$ in this representation. 
If there is a term in $\sum_{k =  0 }^{d e }  \alpha_{k} F_{k}(  \underline{\t})$ whose
degree in $\t_{\ell - 1}$ is  greater than or equal to $d - r$,
and whose degree in $\t_{\ell}$ is greater than or equal to $r$, then it can only come from terms
$\Gamma_f (\t_{s_1}, \dots, \t_{s_d} )$ with precisely $d - r$ of the indices $s_1, \dots, s_d$ equal to  $\ell - 1$,
and the remaining $r$ indices equal to $\ell$. This restriction on the $s_1, \dots, s_d$ is equivalent to
requiring that $\ell - 1 \leq s_1, \dots, s_d \leq \ell$ and $s_1 + \dots + s_d =  (\ell - 1) d + r$. 
Letting $I_j = \{ \ell - 1, \ell \}$, 
 we therefore  deduce that 
 $$
\sum_{k =  0 }^{d e }  \alpha_{k} F_{k}(  \underline{\t})  =  \alpha_j G_j(\t_{\ell - 1} ;  \t_{\ell} ) + \mathfrak{f}_j (\underline{\t} ) + \mathfrak{g}_j (\underline{\t}),
$$
where
$$
G_j(\t_{\ell - 1} ;  \t_{\ell} ) = \sum_{ \substack{ s_1, \dots, s_d \in I_j  \\ s_1 + \dots + s_d =  (\ell - 1) d + r }} \Gamma_f (\t_{s_1}, \dots, \t_{s_d} )
$$
is bihomogeneous of bidegree $(d-r, r)$, and
$\mathfrak{f}_j (\underline{\t} )$ and $\mathfrak{g}_j (\underline{\t})$
are such that every term of $\mathfrak{f}_j (\underline{\t})$  has degree in $\t_{\ell}$ strictly less than $r$ and every term of
$\mathfrak{g}_j (\underline{\t}) $ has degree in $\t_{\ell - 1}$  strictly less than $d - r$.
It will be convenient to observe that
\begin{equation}
\label{j1}
- (d - r) \ell -  r (\ell + 1) + d - 1 = - (\ell - 1) d -  r  - 1 = - j - 1.
\end{equation}

Suppose next that  $r = d$ and write $I_j=\{\ell\}$.
Then there is a decomposition
$$
\sum_{k =  0 }^{d e }  \alpha_k F_k(  \underline{\t})  =  \alpha_j G_j(\t_{\ell} ) + \mathfrak{f}_j (\underline{\t}),
$$
where
$$
G_j (\t_{\ell} ) =  \Gamma_f (\t_{\ell}, \dots, \t_{\ell} )
$$
is homogeneous of degree $d$ and $\mathfrak{f}_j (\underline{\t})$ is of degree in $\t_\ell$ strictly less than $d$.
This time it will be convenient to observe that
\begin{equation}
\label{j2}
- d (\ell + 1) + d - 1 = - \ell d - 1 = - j - 1.
\end{equation}

For each $j\in \{0,\dots, de\}$, it is clear that we are in the setting of Section \ref{sec:BHMG}
and that  Hypothesis \ref{HYP} is satisfied.
Let us put 
$$
\mathfrak{U}_{j} = \{  (\t_s)_{s \in I_j}  \in   \FF_q[u]^{ n \# I_j }:   |\t_{s}| < q^{s + 1} \textnormal{ for all  $s \in I_j$}  \}
$$
and
$$
\mathfrak{V}_{j} =  \left\{  \underline{ \widetilde{\t} } = (\t_s)_{s \notin I_j}  \in   \FF_q[u]^{ n (e + 1-  \# I_j) }:   |\t_{s}| < q^{s + 1} \textnormal{ for all  $s \not \in I_j$}  \right\}.
$$
We can write
$$
S(\bal) = \sum_{  \widetilde{ \underline{\t} } \in \mathfrak{V}_{j}  } T_{j}(\widetilde{ \underline{\t} }; \alpha_{j} ),
$$
where
$$
T_{j}(\widetilde{ \underline{\t} }; \alpha_{j})
=
  \sum_{ (\t_{\ell - 1}, \t_{\ell} ) \in  \mathfrak{U}_{j}  } \psi(  \alpha_j G_j(\t_{\ell - 1} ;  \t_{\ell} ) + \mathfrak{f}_j (\underline{\t} ) + \mathfrak{g}_j (\underline{\t}) ) ,
$$
if $j = (\ell - 1) d + r$ and $1 \leq r < d$, and where 
$$
T_{j}(\widetilde{ \underline{\t} }; \alpha_{j})
=
  \sum_{ \t_{\ell} \in  \mathfrak{U}_{j}  } \psi(  \alpha_j G_j(\t_{\ell} ) + \mathfrak{f}_j (\underline{\t} )  ) ,
  $$
  if $j = \ell d $.
An application of Lemma \ref{LEMMMM} now yields
\begin{align*}
|T_{j}(\widetilde{ \underline{\t} }; \alpha_{j}) |^{2^{d-1}} \leq   |\mathfrak{U}_{j}|^{2^{d-1}}
E_{j}(0)^{ - 1 }  |E_{j}(\alpha_{j})|,
\end{align*}
where $E_{j}(\alpha_{j})$ is as in the statement of the lemma. In particular, since $E_{j}(\alpha_{j})$
is independent of $\widetilde{ \underline{\t} }$, we may further deduce that
\begin{align*}
|S(\bal)|^{2^{d-1}} &\leq  |\mathfrak{V}_{j}|^{2^{d-1}}   |\mathfrak{U}_{j}|^{2^{d-1}}
E_{j}(0)^{ - 1 }  |E_{j}(\alpha_{j})|
\\
&=
|\mathfrak{U}|^{2^{d-1}}
E_{j}(0)^{ - 1 }  |E_{j}(\alpha_{j})|.
\end{align*}
Therefore
\begin{equation}
\begin{split}
\label{decomp}
|S(\bal)|^{2^{d-1}}
=
\prod_{j = 0}^{de}  |S(\bal)|^{ \frac{2^{d-1}}{de + 1} }
&\leq
\prod_{j = 0}^{de}
|\mathfrak{U}|^{  \frac{2^{d-1}}{de + 1} }   E_{j} (0)^{  -  \frac{1}{de + 1} }
| E_{j} (\alpha_{j})  |^{  \frac{1}{de + 1} }
\\
&=
\mathfrak{E}^{2^{d-1}  }  |\mathfrak{U}|^{2^{d-1}  } \prod_{j = 0}^{de}    | E_j (\alpha_j)  |^{  \frac{1}{de + 1}  },
\end{split}
\end{equation}
where
\begin{equation}\label{eq:slab}
\mathfrak{E} = \prod_{ j = 0}^{de}
E_{j} (0) ^{  -  \frac{1}{ (de + 1) 2^{d-1}} }.
\end{equation}

\begin{lemma}\label{lem:slab}
Suppose $n > 2^{d} (d-1) (de + 1)$. For each $0 \leq  (\ell - 1) d + r \leq de$ with $0 \leq \ell \leq e$ and  $1 \leq r \leq d$, we have
$$
\frac{ \sigma_{G_{ (\ell - 1) d + r} }  }{ (de + 1) 2^{d-1}}   > 2 (d-1)
$$
and
$$
 \ell  
 \geq 
 \begin{cases}
   \frac{ (d-r) \ell  + r (\ell + 1) - d + 1}{ 2(d-1) } & \text{ if $1 \leq r < d$},\\
 \frac{ d (\ell + 1)  - d + 1}{ 2 (d-1) }-1 & \text{ if $r = d$}.
 \end{cases}
$$
\end{lemma}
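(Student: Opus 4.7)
The first inequality is essentially a restatement of Lemma~\ref{lemma5.1} combined with the hypothesis on $n$, but it requires verifying that each $G_{(\ell-1)d+r}$ satisfies Hypothesis~\ref{HYP}. To do this, I would return to \eqref{eq:ash}: the inner sum defining $F_j$ is over tuples $(s_1, \dots, s_d)$ with $s_1 + \cdots + s_d = j$, and when restricted to $s_i \in \{\ell-1, \ell\}$ the constraint forces exactly $r$ indices to equal $\ell$ and $d-r$ indices to equal $\ell-1$. Since $\Gamma_f$ is fully symmetric in its arguments, all $\binom{d}{r}$ such tuples contribute the same term, yielding
$$G_{(\ell-1)d+r}(\t_{\ell-1}; \t_\ell) = \binom{d}{r} \Gamma_f(\underbrace{\t_{\ell-1}, \dots, \t_{\ell-1}}_{d-r}, \underbrace{\t_\ell, \dots, \t_\ell}_{r})$$
when $1 \leq r < d$, and similarly $G_{\ell d}(\t_\ell) = \Gamma_f(\t_\ell, \dots, \t_\ell)$ when $r = d$. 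The coefficient $\binom{d}{r}$ lies in $\FF_q^\ast$ since $p > d$: by Kummer's theorem the $p$-adic valuation of $\binom{d}{r}$ counts carries when adding $r$ and $d-r$ in base $p$, and both summands are single base-$p$ digits (as are their sums), so there are no carries. Hypothesis~\ref{HYP} therefore applies with $C_0 = \binom{d}{r} \in \FF_q^\ast$, and Lemma~\ref{lemma5.1} gives $\sigma_{G_j} \geq n$. The assumption $n > 2^d(d-1)(de+1)$ then yields $\sigma_{G_j} / [(de+1) 2^{d-1}] > 2(d-1)$, which is the first inequality.

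The second inequality is a straightforward algebraic verification. In the case $1 \leq r < d$, I would expand $(d-r)\ell + r(\ell+1) = d\ell + r$ and clear the denominator $2(d-1)$; the inequality becomes $(d-2)\ell \geq r - d + 1$. Since $r \leq d-1$ forces $r - d + 1 \leq 0$, while $(d-2)\ell \geq 0$ for $d \geq 2$ and $\ell \geq 0$, this is immediate. In the case $r = d$, the same rearrangement (now absorbing the trailing $-1$) reduces the claim to $(d-2)\ell \geq 3 - 2d$, and $3 - 2d \leq -1$ for $d \geq 2$ again makes it immediate.

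I do not anticipate a serious obstacle. The only content is the identification of $G_j$ with a partial diagonal of $\Gamma_f$ (up to a binomial coefficient that is a unit modulo $p$), which reduces the first inequality to Lemma~\ref{lemma5.1}; the second inequality is then routine arithmetic. The specific shape of the bound in the statement (with $-d+1$ rather than $+d-1$, and the extra $-1$ when $r=d$) is precisely what one needs to match the hypothesis of Proposition~\ref{MVT} when applied to the exponential sum $E_{(\ell-1)d+r}(\alpha)$ emerging from the decoupling \eqref{decomp}.
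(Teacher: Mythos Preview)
Your proposal is correct and follows essentially the same route as the paper: invoke $\sigma_{G_j}\ge n$ from Lemma~\ref{lemma5.1} (after checking Hypothesis~\ref{HYP}) and combine with the assumption on $n$ for the first inequality, then do an elementary algebraic verification for the second. The paper simply asserts that Hypothesis~\ref{HYP} holds (in the text preceding the lemma) rather than spelling out the identification $G_j=\binom{d}{r}\Gamma_f(\t_{\ell-1},\dots,\t_{\ell-1},\t_\ell,\dots,\t_\ell)$ and the invertibility of $\binom{d}{r}$ in $\FF_q$, and its algebraic check for the second part is arranged slightly differently (via $\ell\ge d\ell/(2(d-1))$ and $\ell+1\ge d(\ell+1)/(2(d-1))$), but the substance is identical.
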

\begin{proof}
By Lemma \ref{lemma5.1} we have $\sigma_{G_j} \geq n$ for each $0 \leq j \leq de$. Therefore,
the first statement follows immediately from the hypothesis $n > 2^{d} (d-1)(de + 1)$. For the second statement, when $1\leq r<d$, 
it is easy to see that
$$
\ell  \geq    \frac{  d \ell  }{ 2(d-1) } \geq    \frac{ (d-r) \ell  + r (\ell + 1) - d + 1}{ 2(d-1) }.
$$
Similarly, 
$$
\ell + 1  \geq \frac{ d (\ell + 1) }{ 2 (d-1) } > \frac{ d (\ell + 1)  - d + 1}{ 2 (d-1) } , 
$$
when  $r=d$.
\end{proof}

Finally, on returning to   (\ref{decomp}), we see that 
\begin{align*}
\int_{\TT^{de+1}} |S(\bal)| \d \bal
&\leq
\mathfrak{E} |\mathfrak{U}| \prod_{j=0}^{de}  \int_{\TT} |E_j(\alpha_j)|^{ \frac{1}{ (de + 1) 2^{d-1} } } \d \alpha_j,
\end{align*}
where 
$\mathfrak{E}$ is given by 
\eqref{eq:slab}.
On  writing $j=(\ell - 1) d + r $ with $0 \leq \ell \leq e$ and  $1 \leq r \leq d$, 
we would like to apply  Proposition \ref{MVT} 
to estimate the remaining integral. We wish to apply this result with
with $\rho=\frac{1}{(de+1)2^{d-1}}$, together with the choices 
$$
(P_1,P_2)=
\begin{cases}
(\ell,\ell+1) & \text{ if $r<d$,}\\
(\ell+1,\ell+1) & \text{ if $r=d$.}
\end{cases}
$$
The required  lower bounds on $\sigma_{G_j}$ and $P_1$ now  follow from 
Lemma \ref{lem:slab}.
Hence it follows from (\ref{j1}), (\ref{j2}) and 
\eqref{eq:slab}
that
\begin{align*}
\int_{\TT^{de+1}} |S(\bal)| \d \bal
&\leq
\mathfrak{E} |\mathfrak{U}|
\prod_{ j = 0 }^{de}  E_{j}(0)^{ \frac{1}{ (de + 1) 2^{d-1} } }  q^{- j - 1 } (1 + O( q^{- \delta} ))
\\
&=
|\mathfrak{U}|
\prod_{ j = 0 }^{de}  q^{ - j - 1 } (1 + O( q^{- \delta} ))
\\
&=
|\mathfrak{U}| q^{  -  \sum_{j = 0}^{de} (j + 1) } (1 + O( q^{- \delta} ))
\\
&=
q^{ \widehat{\mu}(e) }  (1 + O( q^{- \delta} )),
\end{align*}
where 
$\widehat{\mu}(e)$ is defined in  \eqref{eq:mu'} and
the implicit constant depends only on $n, d, e$ and $\delta$.
This establishes (\ref{eq:goat}) as required to complete the proof of Theorem \ref{t:2}.

\begin{remark}
\label{sec:finalrem}
We now give a non-rigorous explanation of why our choice of the bihomogeneous structure
is essentially optimal, in terms of the dimension of the Birch singular locus (\ref{sing}), among all possible choices of different Weyl differencing processes.
Let us choose subsets $\mathfrak{I}_i \subset \{ 0, \dots, e \}$ for each $1 \leq i \leq d$.
We can modify the selection of the differencing process in Section \ref{sec:DIFF}
by differencing with respect to $\t_{s}$ with $s \in \mathfrak{I}_1$ in the first round, then with $s \in \mathfrak{I}_2$ in the second round,
and so on, with the effect that we only pick out the terms whose monomials are of the form
$$
t_{s_1, i_1} \dots t_{s_d, i_d},
$$
with 
$
(s_{\sigma(1)}, \dots, s_{ \sigma(d)}) \in \mathfrak{I}_1 \times \dots \times \mathfrak{I}_d
$
for some permutation $\{\sigma(1), \dots, \sigma(d)\}=\{1,\dots,d\}$.
The usual Weyl differencing process corresponds to taking $\mathfrak{I}_i = \{ 0, \dots, e \}$ for each $1 \leq i \leq d$,
while when $r<d$ our choice in Section \ref{sec:proof} corresponds to 
$$
\mathfrak{I}_i 
=
\begin{cases}
 \{ \ell \} & \text{ for $1 \leq i \leq d-r$,}\\
  \{ \ell -  1  \} & \text{ for $d - r < i \leq d$}.
  \end{cases}
  $$
We may observe that every $\alpha_j$ remaining in the resulting exponential sum corresponds to
\begin{equation}
\label{set}
j \in \{ s_1 + \dots  + s_d : s_1 \in \mathfrak{I}_1, \dots, s_d \in \mathfrak{I}_d \}.
\end{equation}
In particular, if we denote by $j_0$ the smallest such $j$, then
$$
j_0 = s_1' + \dots + s_d',
$$
where $s_i' = \min_{s \in  \mathfrak{I}_i} s$ for each $1 \leq i \leq d$, is the unique representation of $j_0$ in the set (\ref{set}).
This means that in the Weyl differencing argument behind Lemma \ref{6.2}, after intersecting with the relevant diagonal,
the row corresponding to $j_0$ in the Jacobian of the resulting system of forms corresponds to the first partial derivatives of
$$
c \Gamma_{f} (\t_{s'_1}, \dots, \t_{s'_d}),
$$
for some $c \in \FF_q$. 
By multilinearity, it is readily seen that the Birch singular locus (\ref{sing}) of any system of forms that includes $c \Gamma_{f} (\t_{s'_1}, \dots, \t_{s'_d})$
has codimension at most $n$ in the ambient space. Thus it appears that 
the Weyl differencing process results in the codimension of the Birch singular locus being at most $n$, no matter how we choose the sets 
 $\mathfrak{I}_1,\dots,\mathfrak{I}_d$.
\end{remark}

\end{document}